\newcounter{ass}
\def\O{\Omega}\def\o{\omega}
\def\s{\sigma}
\def\a{\alpha}
\def\b{\beta}
\def\l{\lambda}
\def\d{\delta}
\def\D{\Delta}
\def\eps{\varepsilon}
\def\g{\gamma}\def\G{\Gamma}
\def\disp{\displaystyle}
\def\r{\rho}
\def\f{\varphi}
\def\th{\theta}
\def\E{{\bf E}}
\def\Prb{{\bf P}}\def\P{{\bf P}}
\def\tl{\widetilde}
\def\bar{\overline}
\def\td{\longrightarrow}
\def\N{\mathbf{N}}
\def\R{\mathbf{R}}
\newtheorem{theorem}{Theorem}[section]
\newtheorem{definition}[theorem]{Definition}
\newtheorem{proposition}[theorem]{Proposition}
\newtheorem{lemma}[theorem]{Lemma}
\newtheorem{corollary}[theorem]{Corollary}
\newtheorem{rem}[theorem]{Remark}
\numberwithin{equation}{section}
\begin{document}
\title{Finite  and  infinite  time  horizon   for BSDE with Poisson jumps.}
\author{Ahmadou Bamba Sow$^{(*)}$}\date{\today}
\maketitle \noindent{\footnotesize
 $^{(*)}$ LERSTAD, UFR de Sciences Appliqu\'ees et de Technologie,
Universit\'e Gaston Berger, BP 234, Saint-Louis, SENEGAL.  email :
{\tt
ahmadou-bamba.sow@ugb.edu.sn}}\\
\begin{abstract}
This   paper   is  devoted  to  solving  a real  valued  backward stochastic differential equation  with   jumps   where   the time horizon  may  be   finite  or  infinite. Under  linear  growth   generator, we   prove  existence of  a minimal  solution. Using a   comparison  theorem    we  show    existence  and  uniqueness of solution  to  such equations  when  the  generator  is   uniformly continuous  and satisfies  a  weakly  monotonic  condition.
\end{abstract}
\textsc{Keywords} : Backward  
stochastic differential equation,  random Poisson   measure,  \\Dol\'eans Dade  exponential.\\

\noindent\textsc{AMS Subject Classification:}  60H05, 60G44.
%
\section{Introduction} 

After   the  pioneer work  of  Pardoux   and  Peng \cite{Par-Peng90} on   linear  Backward stochastic differential  equation (BSDE  in  short)  with   Lipschitz  generator,   the   interest   in   such  stochastic   equations     has  increased  thanks   to the   many   domains of applications   including stochastic  representation  of solutions  of   partial  differential   equations (PDEs in short).   For example, Pardoux and
Peng \cite{Par-Peng92}  and   Peng \cite{Peng1} proved that BSDEs provide a probabilistic formula for solutions of  
 quasilinear parabolic PDEs.

BSDEs with Poisson Process (BSDEP  in short) were first discussed by Tang and Li \cite{Tang-Li} and Wu \cite{Wu}. Studying
such equations, Barles {\it et al}  \cite{Bar-Buc-Par}  generalized the result in  \cite{Par-Peng92}, and obtained
a probabilistic interpretation of a solution of a parabolic integral-partial differential equation (PIDE). This   was done  by  means of   a  real-valued BSDEP  with       Lipschitzian   generator. Since then   many  efforts  have  been   done  in  relaxing  the  Lipschitz   assumption  of  the    generator of  the   BSDEs  (see  \cite{Bah, Kob, Lep-San, Mao} among others) and  the  BDSEP (see  \cite{Par4, Royer, Situ, Yin-Situ}).  
In \cite{Par4},  the   author solved   a   multidimensional  BSDEP  and   showed  an   existence  result  under   monotonicity  in  the  second  variable of  the drift  and   Lipschitz   condition   in the  other ones. Royer \cite{Royer}   focused  in  weakening  the   Lipschitz  condition  required on  the  last  variable  of  the  generator and    improved upon   the    results  given  in  \cite{Bar-Buc-Par}. The  key  point is  a strict  comparison  theorem  and    a  representation  of solution of   the  one dimensional    BSDEP  in  terms  of  non-linear  expectation. But   all  these   results  are   established  with  a   fixed  time   horizon $T$.  A  natural   question   is  under  which  condition   on  the   coefficients  the stochastic equation   still  has   a   solution   given   a  square  integrable    terminal  value  $\xi$ ? 
In  fact  this   problem   has   been  investigated   by  Peng   \cite{Peng1}  and   Darling and   Pardoux \cite{Dar-Par} and   others  researchers    when the   terminal value  $\xi$ is  null      or   satisfies  the   integrability  condition  $\E(e^{\l T} \xi^2)< \infty$,    for  some   $\l>0$  and   random  terminal  time $T$.   Chen  and Wang \cite{Che-Wan}   established   the   first  existence  and   uniqueness of solution      to  BSDE  with   infinite   time   horizon   when   the   generator  satisfies    a Lipschitz  type   condition.    Recently  Fan  {\it et  al}  \cite{She-Lon-Tian}  weakened   assumptions  required  in   \cite{Che-Wan}  and  prove   an   existence   and  uniqueness   result  under    mild   conditions  of  the   generator with  finite  or infinite  time   horizon.  

The aim  of  this   paper   is   to    extend the result  established in  \cite{She-Lon-Tian}   to  the  case   of   BSDEP. Our motivation comes  from the recent work  of Yao \cite{Yao}. The author proves  an existence and uniqueness result  of BSDEP with  infinite time interval  and some  monotonicity   condition  stronger than  those   in \cite{She-Lon-Tian}. In   this work  we show that the  results  obtained   in  \cite{She-Lon-Tian} 
can be extended to  BSDEP. 
The  paper   is  organized   as   follows.  We  first   prove  existence  of   a   minimal  solution  in   Section  2  and   a    comparison  theorem   in Section 3. 
Thanks  to  these   statements we deal   with  the   solvability  of finite  or   infinite   BSDEP   in Section 4. 
\section{BSDE with Poisson Jumps}
\subsection{Definitions and  preliminary  results}
Let $\O$ be   a   non-empty set,  ${\cal F}$   a   $\s-$algebra of sets  of $\O$  and   $\Prb$  a   probability  measure defined  on  $\cal F$.   The  triplet
 $(\O, {\cal F},\;  \Prb)$  
defines    a  probability  space, which   is  assumed    to  be  complete.  
We are    given     two   mutually  independent processes : 
\begin{itemize}
\item[$\bullet$] a $d-$dimensional  Brownian motion $(W_t)_{t\ge 0}$, 
\item[$\bullet$] a random Poisson   measure $\mu$ on  $E \times   \R_+$ with compensator $\nu(dt, de)  =  \l(de) dt$
\end{itemize}
where   the  space   $E = \R-\{0\}$ is equipped with  its Borel   field ${\mathcal E}$   such  that $\{\tl \mu ([0, t]\times  A)= (\mu - \nu)[0, t]\times  A \}$ is  a martingale    for   any  $A\in {\mathcal E}$  satisfying  $\l(A)<\infty$.  $\l$ is a $\s-$finite  measure  on ${\mathcal E}$ and    satisfies
$$ \int_E  (1\wedge|e|^2) \l(de) < \infty.$$
We    consider    the   filtration    $({\cal F}_{t})_{t\ge 0}$    given   by   $  {\cal F}_{t}   =   {\cal F}^W_{t} \vee    {\cal F}^\mu_{t}$,  
where    for any  process   $\{\eta_t\}_{t\ge0},  \; {\cal F}^\eta_{s,t} = \sigma\{\eta_r-\eta_s,  \; s\le r\le t\}\vee {\cal N},  \; \;  {\cal F}^\eta_{t} = {\cal F}^\eta_{0,t} $. $\cal N$ denotes  the  class  of   $\Prb-$null sets   of  $\cal F$. 

\noindent For $Q \in  \N^*, \; | \;.\; | $  stands  for the euclidian norm  in $\R^Q$.\\
We consider the following sets (where  $\E$  denotes   the   mathematical expectation   with respect to  the  probability  measure $\P$),  and   a  non-random horizon   time  $0<T\le + \infty$:
\begin{description}
\item[$\bullet$]
$\disp {S}^2 (\R^Q) $ the space of  ${\cal F}_t -$adapted c\`adl\`ag  processes
$$\Psi: [0, T]\times \O \td \R^Q, \;  \left\VertÊ\Psi  \right\Vert^2_{S^2(\R^Q)}  = \E \left(\sup_{0\leq t\leq T} |\Psi_t|^2 \right) < \infty.$$
\item[$\bullet$] $\disp H^2 (\R^Q) $ the space of ${\cal F}_t-$progressively measurable processes 
$$\Psi: [0, T]\times \O \td \R^Q,\;   \left\VertÊ\Psi  \right\Vert_{H^2(\R^Q)}^2  = \E \int_0^T |\Psi_t|^2 \, dt < \infty.$$
\item[$\bullet$]
$ L^2 (\tl\mu, \R^Q)$ the space of mappings
$ U : \O \times [0, T] \times E \td \R^Q$
which are ${\cal P} \otimes{\mathcal E}$-measurable s.t.
$$ \left\VertÊU   \right\Vert^2_{L^2(\R^Q)}  = \E  \int_0^T \Vert U_t\Vert^2_{L^2(E, {\cal E}, \l, \R)} dt 
< \infty, 
$$
\end{description}
where  ${\mathcal P}$ denotes   the  $\s-$algebra of  ${\cal F}_t-$predictable sets  of $\O\times [0, T]$ and  $$\Vert U_t\Vert^2_{L^2(E, {\cal E}, \l, \R)}  =\int_E \, |U_t (e)|^2\,\l(de).$$
We   may  often   write $|\cdot|$  instead   of   $\Vert\cdot\Vert_{L^2(E, {\cal E}, \l)}$    for  a  sake  of   simplicity.

Notice that the  space ${\cal  B}^2 (\R^Q) = S^2 (\R^Q) \times H^2 (\R^Q)\times L^2 (\tl\mu, \R^Q)$  endowed  with  the  norm  
$$  \left\VertÊ(Y, Z, U) \right\Vert^2_{{\cal  B}^2(\R^Q)}  =  \left\VertÊY  \right\Vert_{S^2(\R^Q)}^2  +  \left\VertÊZ \right\Vert_{H^2(\R^Q)}^2 +  \left\VertÊU \right\Vert_{L^2(\R^Q)}^2 $$
is   a Banach space.

Finally  let  $\bf S$   be  the  set  of   all   non-decreasing continuous  function $\f(\cdot)   : \R_+ \to  \R_+ $  satisfying   $\f(0)=0$  and  $\f(s)>0$  for  $s>0$.  

Let    $f : \O\times [0, T] \times  \R \times  \R^{d} \times L^2(E, {\cal E}, \lambda, \R) \to  \R $ be   jointly   measurable.  
Given $\xi$ a  ${\cal F}-$measurable $\R-$valued random  variable,   we   are  interested  in  the BSDEP with  parameters   $(\xi,   f, T)$:
\begin{equation}
\label{backw}
Y_t= \xi  + \int_t^T f\left(r, \Theta_r\right) dr  - \int_t^TZ_r dW_r  -  \int_t^T\int_E U_r(e) \tl \mu(dr, de),   \; 0\le t  \le T,
\end{equation}
where  $\Theta_r$  stands   for  the   triple  $(Y_r, Z_r, U_r)$. \\
For   instance  let  us   precise  the  notion  of solution   to  \eqref{backw}. 
\begin{definition}
A  triplet  of   processes $(Y_t, Z_t, U_t)_{0\le t \le T}   $ is called  a solution   to  eq.  \eqref{backw},   if $(Y_t, Z_t, U_t)\in  {\cal  B}^2 (\R) 
$ and   satisfies eq. \eqref{backw}.
\end{definition}
First     we  state    some   results   in the   case   of   Lipschitz type  conditions  of  the  generator.  
Suppose  that   assumption ${\bf (A)}$  holds   (where  $0<T\le \infty$) :  

\noindent{\bf (A1)} :  For all $(y, z,  u) \in  \R \times  \R^{d} \times \R,  \; f(\cdot, y, z, u)$  is a  progressively measurable  process   and satisfies
$\E\left[\left(\int_0^T|f(r, 0, 0, 0)| dr\right)^2\right] <\infty$.\\
{\bf (A2)} :   There   exist  two     non-random  functions      $\g (\cdot),   \; \r(\cdot),   \;   : [0, T]\to  \R^+$    
such that     for   $0\!\le \!t \!\le T$  and  $(y, y^\prime) \in \R^2, \; (z, z^\prime) \in (\R^d)^2$ and   $u\in  L^2(E, {\cal E}, \l, \R)$,  
\begin{equation*}
\vert f(t, y, z, u) -  f(t, y^\prime, z^\prime, u)  \vert \le   \g(t) |y - y^\prime| +   \r(t)  |z-z^\prime|. 
\end{equation*}
{\bf(A3)} :  There   exists  $-1<c \le 0$  and   $C>0$, a  deterministic   function $\s(\cdot) : [0, T] \to  \R_+$   and $\b :  \O\times[0, T]\times E \to  \R$,   ${\cal  P}\otimes {\cal E}-$measurable  satisfying  $c(1\wedge |e|) \le  \b_t(e) \le   C(1\wedge |e|)$
such that   for all  $y\in \R,   \; z\in \R^d$ and    $u, u^\prime  \in (L^2(E, {\cal E}, \l, \R))^2$, 
\begin{equation}
 f(t,  y,  z, u)  - f(t,  y,  z, u^\prime)  \le    \s(t) \int_E \left(u(e)   - u^\prime(e)\right)\b_t(e) \l(de). 
\end{equation}   
{\bf (A4)} : The   integrability condition   holds : $\disp   \int_0^T  (\g(s) +  \r^2(s)+\s^2(s))  ds   < \infty.$\\
\begin{rem}
Let   us  mention  that {\bf(A3)}  implies  that  $f$  is   $\s(t)-$Lipschitz  in $u$  since    we   have  (where  $\tl c$ is   a universal   positive   constant)
\begin{align*}
|f(t,  y,  z, u)  - f(t,  y,  z, u^\prime)|  &\le    \tl c\,  \s(t) \int_E \left|u(e)   - u^\prime(e)\right| (1\wedge |e|) \l(de)\\ &\le \tl c \, \s(t)\left(\int_E \left|u(e)   - u^\prime(e)\right|^2 \l(de)\right)^{1/2}
\\&:= \tl c \, \s(t) \Vert u - u^\prime\Vert_{L^2(E, {\cal E}, \l, \R)}
\end{align*}
\end{rem}
We  have  the following   result  which  is  a  consequence   of   Lemma 2.2  in  \cite{Yin-Situ}.
\begin{lemma}\label{exist-Yn}
 Let $\xi \in   L^2(\O,  {\cal F}, \Prb)$ and   $0<T\le \infty$.  If   {\bf (A)}   holds  then    eq. \eqref{backw}   with   parameters  $(\xi,  f, T)$ has   a   unique          solution
$ (Y_t, Z_t, U_t)_{0\le t \le T}$.
\end{lemma}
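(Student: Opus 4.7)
The plan is to follow the standard Picard iteration argument on the Banach space ${\cal B}^2(\R)$, equipped however with an equivalent norm weighted so as to absorb the time-dependent Lipschitz coefficients from (A2)--(A3). Concretely, I would set $A(t) = \int_0^t \bigl(\g(s) + \r^2(s) + \s^2(s)\bigr)\, ds$, which is finite for every $t\le T$ by (A4), and for a sufficiently large $\b > 0$ replace the natural norm on ${\cal B}^2(\R)$ by
\begin{equation*}
\|(Y,Z,U)\|^2_\b \;=\; \E\int_0^T e^{\b A(t)}\Bigl(|Y_t|^2+|Z_t|^2+\|U_t\|^2_{L^2(E,{\cal E},\l,\R)}\Bigr) dt.
\end{equation*}

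Starting from $(Y^0,Z^0,U^0)\equiv 0$, I would define the Picard sequence iteratively: given $(Y^n,Z^n,U^n)\in{\cal B}^2(\R)$, apply the martingale representation theorem for $(W,\tl\mu)$ to the square-integrable martingale $M^{n+1}_t = \E\bigl[\xi+\int_0^T f(r,\Theta^n_r)\, dr\,\big|\,{\cal F}_t\bigr]$ to produce $(Z^{n+1},U^{n+1})$, and then set $Y^{n+1}_t = \E\bigl[\xi+\int_t^T f(r,\Theta^n_r)\, dr\,\big|\,{\cal F}_t\bigr]$. By (A1) together with the Lipschitz consequence of (A2)--(A3) recorded in the Remark, the driver applied to any element of ${\cal B}^2(\R)$ is square-integrable in the relevant sense, so each iterate again belongs to ${\cal B}^2(\R)$. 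Applying It\^o's formula to $e^{\b A(t)}|Y^{n+1}_t-Y^n_t|^2$, using the Lipschitz bounds, absorbing cross terms via $2ab\le\eps a^2+\eps^{-1}b^2$ and taking expectations, the derivative of the weight contributes a term $\b A'(t) e^{\b A(t)}|Y^{n+1}_t-Y^n_t|^2$; for $\b$ chosen large enough this dominates the Lipschitz contributions, yielding a strict contraction in $\|\cdot\|_\b$ between consecutive increments. The resulting fixed point $(Y,Z,U)\in{\cal B}^2(\R)$ is the sought solution of \eqref{backw}, and uniqueness follows from the same It\^o computation applied to the difference of two solutions.

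The main obstacle compared with the classical finite-horizon Lipschitz setting is two-fold: the horizon $T$ may be infinite, and the Lipschitz "constants" are themselves functions of time. A naive weight $e^{\b t}$ would fail on both counts, but the choice $e^{\b A(t)}$ with $A$ as above exploits (A4) to neutralise both difficulties at once — finiteness of $A(T)$ ensures that the weighted and unweighted norms on ${\cal B}^2(\R)$ are equivalent (even when $T=\infty$), while the pointwise derivative $A'(t)=\g(t)+\r^2(t)+\s^2(t)$ is exactly what is needed to beat the cross terms produced by (A2) and the Remark. A secondary point is that (A3) is strictly stronger than Lipschitz continuity in $u$; only its Lipschitz consequence from the Remark is invoked here, the one-sided structure with $-1<c\le 0$ being reserved for the comparison theorem and the Dol\'eans--Dade constructions of later sections.
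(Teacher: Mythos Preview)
The paper does not actually prove this lemma: it simply records it as ``a consequence of Lemma 2.2 in \cite{Yin-Situ}'' and moves on. Your proposal therefore supplies a self-contained argument where the paper outsources one, and the Picard/contraction scheme with the weight $e^{\b A(t)}$, $A(t)=\int_0^t(\g+\r^2+\s^2)\,ds$, is precisely the device that underlies results of this type (and, in particular, the cited reference). The observation that finiteness of $A(T)$ from {\bf(A4)} makes the weighted and unweighted norms equivalent even when $T=+\infty$, while $A'(t)$ matches the Lipschitz moduli pointwise, is exactly the right idea.

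One technical caveat: with the norm written as $\E\int_0^T e^{\b A(t)}\bigl(|Y_t|^2+|Z_t|^2+\|U_t\|^2\bigr)\,dt$, the It\^o computation on $e^{\b A(t)}|\widehat Y_t|^2$ produces $\b A'(t)e^{\b A(t)}|\widehat Y_t|^2$ on the left-hand side but, after Young's inequality, terms like $\g(t)e^{\b A(t)}|\bar Y_t|^2$ on the right. Since $\g(t)$ need not be bounded, the $Y$-part of the contraction does not close in the norm exactly as you wrote it. The standard fix is either to put the factor $A'(t)$ in front of $|Y_t|^2$ in the definition of $\|\cdot\|_\b$, or to run the fixed-point map in the $(Z,U)$ variables alone (recovering $Y$ as the conditional expectation and getting its $S^2$-bound a posteriori via Doob and the BDG inequality). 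Either adjustment is routine and does not affect the overall strategy.
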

The  proof   of   our  main  result   need a   comparison  theorem  in infinite  time  horizon.
Given   two  parameters   $(\xi^1, f^1, T)$  and    $(\xi^2, f^2, T)$,  we   consider  the BSDEPs,      $i =1, 2$, 
\begin{equation}
\label{backw-i}
Y^{i}_t= \xi^{i}  + \int_t^T f^{i}\left(r, \Theta^{i}_r\right) dr  - \int_t^TZ^{i}_r dW_r  -  \int_t^T\int_E U^{i}_r(e) \tl \mu(dr, de),   \; 0\le t  \le T,
\end{equation}
where for   $i=1, 2,   \;  \Theta^{i}_\cdot$  stands  for  the   triple   $(Y^{i}_\cdot, Z^{i}_\cdot, U^{i}_\cdot)$. \\
Assume   in addition that   \\
{\bf (A5)} :   $\xi^1\le   \xi^2$  and   $\forall  (\o, t,  y,  z,  u),  \quad     f^1(\o, t, y, z, u) \le f^2(\o, t, y, z, u).$ \\ 

We  have   the   following   result    which  is  proved in \cite{Royer} in  the   case $T<+\infty$ (see Theorem  2.5). The   proof  when  $T=+\infty$ is   given  in   Section \ref{proofs}.
\begin{theorem}\label{comp-Lipsch}
Suppose   that    $f^1$   and    $f^2$  satisfy {\bf (A1)-(A5)} and  $0<T\le +\infty$.  
If  $(Y^{i}_r, Z^{i}_r, U^{i}_r), \; i=1, 2$  are   solutions    to  \eqref{backw-i},  then    we  have  
$$ Y^1_t   \le   Y^2_t,   \quad    \P-a.s.$$
\end{theorem}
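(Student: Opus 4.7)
The plan is to adapt Royer's finite-horizon comparison argument (the proof of his Theorem 2.5) by linearising the driver difference, introducing an exponential change of measure that kills all linear terms, and then extending the resulting identity to $T=+\infty$ by passing to the limit through the integrability in {\bf (A4)}. First I would set $\bar Y=Y^1-Y^2$, $\bar Z=Z^1-Z^2$, $\bar U=U^1-U^2$ and $\bar\xi=\xi^1-\xi^2\le 0$, so that the triple $(\bar Y,\bar Z,\bar U)$ solves a BSDEP driven by $g_r:=f^1(r,\Theta^1_r)-f^2(r,\Theta^2_r)$. Writing $g_r$ as a telescoping sum that varies one argument of $f^1$ at a time and finally replaces $f^1$ by $f^2$, assumptions {\bf (A2)}--{\bf (A3)} yield predictable processes $a$, $b$ with $|a_r|\le\g(r)$ and $|b_r|\le\r(r)$, while the inequality in {\bf (A3)} together with {\bf (A5)} produce a non-positive process $\eta_r\le 0$ such that
\[
g_r=a_r\bar Y_r+\langle b_r,\bar Z_r\rangle+\s(r)\int_E\bar U_r(e)\b_r(e)\l(de)+\eta_r.
\]

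Next I would introduce the Dol\'eans--Dade exponential
\[
\G_t=\exp\!\left(\int_0^t a_r\,dr\right)\,{\cal E}\!\left(\int_0^\cdot b_r\,dW_r+\int_0^\cdot\!\int_E \s(r)\b_r(e)\,\tl\mu(dr,de)\right)_t,
\]
which is strictly positive because $\s(r)\b_r(e)\ge\s(r)\,c\,(1\wedge|e|)>-1$ (after a standard truncation of $\s$ if necessary, since $c>-1$ is assumed in {\bf (A3)}), and which {\bf (A4)} promotes to a uniformly $L^2$-bounded martingale converging in $L^2$ as $t\to\infty$. Applying It\^o's formula to $\G_t\bar Y_t$ on $[t,S]$ for a finite $S\le T$, the three linear contributions $a_r\bar Y_r$, $\langle b_r,\bar Z_r\rangle$ and $\s(r)\int\bar U_r\b_r\l$ cancel exactly against the drift and covariations coming from $\G$, leaving the identity
\[
\G_t\bar Y_t=\E\!\left[\G_S\bar Y_S\,\big|\,{\cal F}_t\right]+\E\!\left[\int_t^S\G_{r-}\,\eta_r\,dr\,\Big|\,{\cal F}_t\right].
\]
Since $\eta_r\le 0$ and $\G\ge 0$, the last conditional expectation is non-positive, hence $\G_t\bar Y_t\le\E[\G_S\bar Y_S\mid{\cal F}_t]$.

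Finally, when $T<+\infty$ I take $S=T$ and use $\bar Y_T=\bar\xi\le 0$ to conclude $\bar Y_t\le 0$ almost surely since $\G_t>0$. For $T=+\infty$ I choose $S=T_n\to\infty$: because $(\bar Y,\bar Z,\bar U)\in{\cal B}^2(\R)$ the equation itself forces $\bar Y_{T_n}\to\bar\xi$ in $L^2$, and the $L^2$-boundedness of $\G$ then gives, via Cauchy--Schwarz, $\E[\G_{T_n}\bar Y_{T_n}\mid{\cal F}_t]\to\E[\G_\infty\bar\xi\mid{\cal F}_t]\le 0$, so $\bar Y_t\le 0$ again. The main technical obstacle is precisely this infinite-horizon passage to the limit: I must verify that $\G$ is a genuine (not merely local) $L^2$-martingale converging at infinity and that the stochastic integrals appearing in It\^o's formula are true martingales, both of which rely crucially on the integrability assumption {\bf (A4)} combined with the uniform lower bound $c>-1$ on the jump coefficient $\b$, and will likely require a preliminary localisation or truncation argument.
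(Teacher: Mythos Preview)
Your proposal is correct and follows essentially the same route as the paper: linearise the generator difference, kill the linear terms via a Dol\'eans--Dade exponential/Girsanov change, then pass to the limit $T\to\infty$ using the integrability in {\bf (A4)}. The only cosmetic difference is that the paper separates the two steps---it first changes to a measure $\tl{\bf P}$ under which the $b$- and $\b$-terms disappear (Proposition~\ref{eq-lineaire} and Theorem~\ref{girsanov}), and then multiplies by the discount factor $\G_t=e^{\int_0^t a_r\,dr}$ alone---whereas you pack everything into a single multiplier $\G$ under ${\bf P}$; these are equivalent formulations of the same argument, and your handling of the $u$-variable (absorbing the slack from the one-sided inequality {\bf (A3)} into $\eta_r\le 0$) is in fact cleaner than the paper's pointwise difference quotient $\Delta_u f^1(s,e)$.
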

\begin{rem}
Theorem \ref{comp-Lipsch}  established  a  comparison  theorem in  the  case   of  Lipschitz coefficients   for either $T<\infty$  or  $T=+\infty$.  Basically  it  improves   the  well   known  result   in  the   finite  time  horizon.   
\end{rem}
Let  us now  deal  with    our   problem.
\subsection{Existence  of   a  minimal solution}
In this section, we will prove   existence of a minimal solution for BSDEPs when their
generators   are continuous and have a linear growth (see Theorem \ref{minimal}  below). First  let  us  give  the  
\begin{definition}
A solution $(Y_t, Z_t, U_t)_{0\le t \le T}   $  of   eq.   \eqref{backw} is called  a minimal   solution     if   for  any    other   solution   $(\tl  Y_t, \tl Z_t, \tl U_t)_{0\le t \le T}$  to   \eqref{backw}  we  have for   each $ 0\le t \le T,   \quad    Y_t \le \tl Y_t.$
\end{definition}

We introduce the following  list  of   conditions  weaker  than  those required   in   \cite{Bar-Buc-Par, Royer, Yao, Yin-Situ}. 

We  assume  that  $0\le T\le +\infty$   and the  generator   $f$  satisfies    assumptions {\bf (H1)} : \\
{\bf (H1.1)} :  There   exist   three  functions    $\g (\cdot),   \, \r(\cdot),   \,  \s(\cdot)  : [0, T]\to  \R^+$ satisfying {\bf (A4)}.\\
{\bf (H1.2)} :  There   exists   a      ${\cal F}_t-$progressively  mesurable  nonnegative process $(f_t)_{0\le t\le T}$ s.t.   $\E\left[\left(\int_0^T f_t dt\right)^2\right] <\infty$ and  for    
$(t, y, z, u)\in [0, T]\times \R\times \R^d\times L^2(E, {\cal E}, \l, \R)$,
$$ \vert f(t,\o,  y, z, u)\vert \le  f_t(\o)  +  \g(t) |y| +   \r(t)  |z| + \s(t)|u|.  $$
{\bf (H1.3)} : $f(\o,  t, \cdot, \cdot, \cdot) :    \R \times  \R^{d} \times L^2(E, {\cal E}, \l, \R)  \to  \R$   is   continuous. \\ 

As  in  \cite{Lep-San},  we   are  led to  consider  the   sequence  $f_n:  \O\times \R \times  \R^{d} \times L^2(E, {\cal E}, \l, \R)  \to  \R$  associated  to   $f$ defined   by $\forall  (t, \omega , y, z, u) \in \O\times \R \times  \R^{d} \times L^2(E, {\cal E}, \l, \R)$  
$$  f_n(t, \omega , y, z, u)  =  \inf_{(y^\prime, z^\prime, u^\prime) \in \R^{1+d} \times L^2(E, {\cal E}, \l, \R)} [f(t, \omega, y^\prime, z^\prime, u^\prime)+
 n (|y-y^\prime|+ |z-z^\prime|+|u-u^\prime|)].$$
Using similar  computations as  in  proof of Lemma 1 in \cite{Lep-San}, one can obtain the following proposition. We
omit its proof. 
\begin{proposition}\label{prelim}
Assume  that  $f$ satisfies   {\bf (H1)}.  Then the sequence of functions $f_n$ is well defined for each $n\ge 1$, and it satisfies, $d{\bf P}\times  dt-$a.s.
\begin{itemize}
\item[(i)] Linear growth: $\forall n\ge 1$,   $\forall   y, z, u, \;  |f_n(\o, t, y, z, u)| \le  f_t (\o) + \g(t)|y| + \r(t) |z|+ \s(t) |u|$.
\item[(ii)] Monotonicity in $n$: $\forall   y, z, u, \quad   f_n(\o, t, y, z, u)$  increases in $n$.
\item[(iii)] Convergence:    $\forall  (\o, t, y, z, u) :   \O\times [0, T]\times \R \times  \R^{d} \times L^2(E, {\cal E}, \l, \R)$,
\begin{equation} \label{convergence} f_n(\o, t, y, z, u) \xrightarrow{n\to +\infty}  f(\o, t, y, z, u).\end{equation}
\item[(iv)] Lipschitz condition:  $\forall n\ge 1$, $\forall  y, y^\prime, z, z^\prime, u, u^\prime$, we have
$$|f_n(\o, t, y, z, u) -  f_n(\o, t, y^\prime, z^\prime, u^\prime)|  \le n\g(t)|y - y^\prime| + n\r(t) |z - z^\prime|  + n \s(t)|u - u^\prime|.$$
\end{itemize}
\end{proposition}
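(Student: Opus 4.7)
My plan is to verify the four items in turn by following the classical Lepeltier--San Martin inf-convolution argument, adapted to the time-dependent growth coefficients of (H1).

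For (i), testing $(y',z',u')=(y,z,u)$ in the infimum and using (H1.2) gives the upper bound
$$f_n(t,\o,y,z,u)\le f(t,\o,y,z,u)\le f_t+\g(t)|y|+\r(t)|z|+\s(t)|u|.$$
For the matching lower bound I would combine (H1.2) at $(y',z',u')$ with $|y'|\le |y|+|y-y'|$ (and the analogues in $z,u$) to obtain, for any admissible $(y',z',u')$,
$$f(t,y',z',u')+n(|y-y'|+|z-z'|+|u-u'|)\ge -f_t-\g(t)|y|-\r(t)|z|-\s(t)|u|+(n-\g(t))|y-y'|+(n-\r(t))|z-z'|+(n-\s(t))|u-u'|.$$
For $n$ exceeding $\g(t)\vee\r(t)\vee\s(t)$, which by (A4) is the case eventually in $n$ at almost every $t$, the last three terms are nonnegative, so the infimum is finite and the desired linear growth follows. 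Item (ii) is immediate from the definition: enlarging $n$ raises the penalized integrand pointwise and hence raises its infimum.

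For (iii), since $f_n\le f$ by (i) it suffices to prove $\liminf_n f_n\ge f$. Picking a $\tfrac{1}{n}$-near-minimizer $(y'_n,z'_n,u'_n)$ and inserting the linear growth bound for $f_n$ from (i) together with (H1.2) applied at $(y'_n,z'_n,u'_n)$, exactly the same manipulation as above yields
$$(n-\g(t))|y-y'_n|+(n-\r(t))|z-z'_n|+(n-\s(t))|u-u'_n|\le 2f_t+2\g(t)|y|+2\r(t)|z|+2\s(t)|u|+\tfrac{1}{n}.$$
Therefore $(y'_n,z'_n,u'_n)\to(y,z,u)$, and continuity (H1.3) gives $f(t,y'_n,z'_n,u'_n)\to f(t,y,z,u)$. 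Since the penalty term is nonnegative, $f_n(t,y,z,u)\ge f(t,y'_n,z'_n,u'_n)-\tfrac{1}{n}$, which yields the claimed limit.

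For (iv), the triangle inequality inside the infimum does the job: for every competitor $(y',z',u')$,
$$f(t,y',z',u')+n(|y_1-y'|+|z_1-z'|+|u_1-u'|)\le f(t,y',z',u')+n(|y_2-y'|+|z_2-z'|+|u_2-u'|)+n(|y_1-y_2|+|z_1-z_2|+|u_1-u_2|),$$
so taking the infimum over $(y',z',u')$ and then swapping the indices $1\leftrightarrow 2$ delivers the Lipschitz estimate (with constant $n$ in each coordinate, which subsumes the stated bound). The only genuinely delicate step is (iii), where one must force the $\tfrac{1}{n}$-near-minimizer to converge to $(y,z,u)$ in order to invoke the continuity hypothesis; this is exactly what the two-sided linear growth from (i) provides. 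Measurability of $f_n$ in $\o$, needed to call $f_n$ a generator, follows from standard measurable-selection considerations since the infimum is taken over a separable Polish space.
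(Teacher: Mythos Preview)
The paper does not actually prove this proposition: it states that the result follows ``using similar computations as in proof of Lemma~1 in \cite{Lep-San}'' and omits the argument. Your write-up is precisely the Lepeltier--San~Martin inf-convolution computation, so you have supplied exactly what the paper points to.

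There is, however, a genuine slip in your treatment of (iv). From the unweighted penalty $n(|y-y'|+|z-z'|+|u-u'|)$ in the paper's definition of $f_n$, the triangle-inequality argument indeed gives Lipschitz constant $n$ in each coordinate, as you write. But this does \emph{not} ``subsume'' the stated bound $n\g(t)|y-y'|+n\r(t)|z-z'|+n\s(t)|u-u'|$: that implication would require $\g(t),\r(t),\s(t)\ge 1$, which is nowhere assumed. In fact the stated form of (iv) simply does not follow from the definition of $f_n$ as written; this appears to be an inconsistency in the paper itself. Note that the weighted Lipschitz constants are exactly what is needed downstream to invoke Lemma~\ref{exist-Yn} when $T=+\infty$, since a flat constant $n$ fails the integrability condition {\bf (A4)} on an infinite horizon; this strongly suggests the intended penalty in the definition of $f_n$ is the weighted one, $n\g(t)|y-y'|+n\r(t)|z-z'|+n\s(t)|u-u'|$, under which your arguments for (i)--(iv) go through verbatim with the stated constants. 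You were right to flag the analogous restriction in (i), where well-definedness and the lower bound require $n>\g(t)\vee\r(t)\vee\s(t)$ rather than holding for every $n\ge 1$ as the proposition asserts.
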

Thus  by  Lemma  \ref{exist-Yn},    the BSDEP   with  parameters  $(\xi,  f_n, T)$: 
\begin{equation}
\label{eq-Yn}
Y_t^n= \xi  + \int_t^T f_n\left(r, \Theta_r^n\right) dr  - \int_t^TZ^n_r dW_r  -  \int_t^T\int_E U^n_r(e) \tl \mu(dr, de),   \; 0\le t  \le T,
\end{equation}
has   a  unique solution   $(\Theta^n_t)_{0\le t \le T} = (Y^n_r, Z^n_r, U^n_r)_{0\le t \le T}$.

The   Main    result  in  this  section  is  the   following 
\begin{theorem}\label{minimal}
Let $\xi \in   L^2(\O,  {\cal F}_T, \Prb)$ and   $0<T\le \infty$. Under  assumption  {\bf (H1)},   the   BSDEP \eqref{backw}  has   a     minimal   solution
$ (Y_t, Z_t, U_t)_{0\le t \le T}$.
\end{theorem}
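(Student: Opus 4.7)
The plan is to follow the Lepeltier--San Martin monotone approximation strategy, suitably adapted to the jump setting and to the possibly infinite horizon. Starting from the inf-convolution sequence $f_n$ provided by Proposition \ref{prelim}, Lemma \ref{exist-Yn} yields a unique solution $(Y^n,Z^n,U^n)\in\mathcal{B}^2(\R)$ of the BSDEP \eqref{eq-Yn} (the Lipschitz constants $n\g(\cdot),\,n\r(\cdot),\,n\s(\cdot)$ still satisfy \textbf{(A4)} for each fixed $n$). Since $f_n\le f_{n+1}$ pointwise, Theorem \ref{comp-Lipsch} (valid also for $T=+\infty$) gives the monotonicity $Y^n_t\le Y^{n+1}_t$, $\Prb$-a.s. for every $t$.

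The next step is to sandwich the whole sequence between two $S^2$-processes in order to pass to the limit. For this I would introduce the two Lipschitz generators
\[
g^{\pm}(t,y,z,u)\;=\;\pm f_t\;\pm\;\g(t)|y|\;\pm\;\r(t)|z|\;\pm\;\s(t)|u|,
\]
which, thanks to \textbf{(H1.1)}--\textbf{(H1.2)} and \textbf{(A4)}, fit into the framework of Lemma \ref{exist-Yn}; the corresponding BSDEPs with terminal value $\xi$ admit solutions $(\bar Y,\bar Z,\bar U)$ and $(\underline Y,\underline Z,\underline U)$ in $\mathcal{B}^2(\R)$. Using Proposition \ref{prelim}(i) together with Theorem \ref{comp-Lipsch} once more, one obtains $\underline Y_t\le Y^n_t\le\bar Y_t$, $\Prb$-a.s., for every $n$, hence a uniform bound $\sup_n\|Y^n\|_{S^2(\R)}<\infty$. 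Combined with the monotonicity this lets us define
\[
Y_t\;:=\;\lim_{n\to+\infty}Y^n_t,
\]
and an application of the dominated convergence theorem (dominant: $|\underline Y|+|\bar Y|$) yields $Y^n\to Y$ in $S^2(\R)$.

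To produce the limits of $Z^n$ and $U^n$ I would apply It\^o's formula to $e^{A_t}|Y^n_t-Y^m_t|^2$, where $A_t=\int_0^t a(s)\,ds$ is an exponential weight chosen so that the cross terms coming from the Lipschitz estimate of Proposition \ref{prelim}(iv) are absorbed; a standard choice in the infinite-horizon setting uses $a(s)$ involving $\g(s),\r^2(s),\s^2(s)$ so that \textbf{(A4)} guarantees $A_\infty<\infty$. After taking expectation, using the jump-orthogonality of $\tilde\mu$ against $W$ and the Burkholder--Davis--Gundy inequality, the resulting inequality together with the already proven $S^2$-convergence of $Y^n$ shows that $(Z^n,U^n)$ is Cauchy in $H^2(\R^d)\times L^2(\tilde\mu,\R)$ with limit $(Z,U)$. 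Passing to the limit in \eqref{eq-Yn}, using the pointwise convergence \eqref{convergence} and the uniform linear growth from Proposition \ref{prelim}(i) to invoke Vitali/dominated convergence on the drift term, shows that $(Y,Z,U)\in\mathcal{B}^2(\R)$ solves \eqref{backw}.

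Minimality is then an easy consequence: if $(\tl Y,\tl Z,\tl U)$ is any other solution of \eqref{backw}, then $f_n\le f$ pointwise and Theorem \ref{comp-Lipsch} applied to the pair $(\xi,f_n)$ vs.\ $(\xi,f)$ gives $Y^n_t\le \tl Y_t$ a.s.; letting $n\to\infty$ yields $Y_t\le \tl Y_t$. I expect the genuinely delicate step to be the Cauchy argument for $(Z^n,U^n)$ when $T=+\infty$: one must verify that the exponential-weight manipulation still controls the jump integrand uniformly in $n$ despite the $n$-dependent Lipschitz constants, which forces the weight to be chosen with care so that it depends only on $\g,\r,\s$ and not on $n$. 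The \textbf{(A3)}-type one-sided structure on the $u$-variable (rather than a mere modulus Lipschitz bound) is what makes this controllable, via the Dol\'eans--Dade exponential referred to in the abstract.
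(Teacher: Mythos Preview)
Your overall strategy---inf-convolution $f_n$, monotonicity via Theorem \ref{comp-Lipsch}, sandwich, pass to the limit, minimality by comparison---matches the paper exactly. The divergence is in how you obtain the Cauchy property for $(Z^n,U^n)$, and there your plan runs into the very obstacle you flag without resolving it.

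The paper does \emph{not} use the Lipschitz estimate (iv) of Proposition \ref{prelim} at all in the Cauchy step, so the $n$-dependence never appears and no exponential weight is needed. Instead the argument is two-staged. First, apply It\^o to $|Y^n_t|^2$ and use only the uniform linear growth bound (i), $|f_n(r,\Theta^n_r)|\le f_r+\g(r)|Y^n_r|+\r(r)|Z^n_r|+\s(r)|U^n_r|$, together with $|Y^n|\le G:=\sup_n\sup_s|Y^n_s|\in L^2$, to get a priori bounds $\sup_n\|Z^n\|_{H^2}^2\le 2M$ and $\sup_n\|U^n\|_{L^2}^2\le 2M$ that are independent of $n$. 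Second, apply It\^o to $|Y^{n,m}_t|^2$ (no weight) and bound the drift term crudely by $2|Y^{n,m}_r|\bigl(|f_n(r,\Theta^n_r)|+|f_m(r,\Theta^m_r)|\bigr)$, again via linear growth. After H\"older, every term on the right carries a factor $|Y^{n,m}|$ multiplied by something uniformly bounded thanks to the first step; dominated convergence (dominant $2G$) then kills the right-hand side as $n,m\to\infty$. Your proposed exponential-weight/Lipschitz route cannot be made $n$-independent, and the Dol\'eans--Dade exponential plays no role here---it is used only in the comparison theorems, not in the existence proof.

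One smaller point: the claim that dominated convergence gives $Y^n\to Y$ in $S^2(\R)$ directly is not justified (monotone pointwise convergence plus an $S^2$ envelope does not automatically control the supremum). In the paper, $S^2$-convergence of $Y^n$ is obtained only at the end, after $(Z^n,U^n)$ is shown to be Cauchy, by passing to the limit in the equation itself.
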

\begin{proof}
We  follow  the  proof  of Theorem  1 in   \cite{She-Lon-Tian}. Consider  $F:  \O\times [0, T]\times \R \times  \R^{d} \times L^2(E, {\cal E}, \l, \R)\to \R$ given   by  
$$\forall (\o, t, y, z, u),  \quad    F(\o, t, y, z, u)  =   f_t (\o) + \g(t)|y| + \r(t)|z|+  \s(t)|u|.$$ 
It   follows from     Lemma   \ref{exist-Yn}  that  the   BSDEP   with   parameters  $(\xi, F, T)$  admits   a   unique solution   $(\tl Y_t, \tl Z_t, \tl U_t)_{0\le t \le T}$. Applying Theorem \ref{comp-Lipsch} and Proposition \ref{prelim}, we deduce   that       
$$ \forall  (\o, t) \in \O\times [0, T],   \quad   Y_t^1(\o)\le Y_t^n(\o)\le Y_t^{n+1}(\o)\le   \tl Y_t(\o).    $$
Hence     there    exists   a  ${\cal F}_t-$progressively  measurable  process   $(Y_t)_{0\le t \le T}$  such  that  $\disp   \lim_{n\to + \infty} Y_t^n(\o)  =  Y_t(\o)$. Putting     
$G =  \disp  \sup_n \sup_{0\le s \le T} |Y_s^n(\o)|$,   arguing  as  in   \cite[Theorem 1]{She-Lon-Tian}   we   have      
$$  \E\left( \sup_{0\le s \le T} |Y_s(\o)|^2\right)  \le   \E(G^2) < \infty.$$
It\^o's  formula applied  to   eq. \eqref{eq-Yn},  yields  ($0\le t \le T$)
\begin{align*}
\mathbf{E} |Y_t^n|^2  + \mathbf{E}\int_t^T \!\!|Z_r^n|^2  dr  &+ \mathbf{E}\int_t^T\!\!\int_E\!\! |U_r^n(e)|^2 \lambda(de) dr   =  \mathbf{E}|\xi|^2  
+  2\mathbf{E}\int_t^T \!\!Y_r^n \, f_n\left(r, \Theta_r^n\right) dr  \\
&\le   \mathbf{E}|\xi|^2  
+  2\mathbf{E}\int_t^T \!\!|Y_r^n| \left(f_r + \g(r) |Y_r^n |+  \r(r) |Z_r^n | + \s(r) |U_r^n |\right) dr 
\end{align*}
Using the  inequality  $2ab  \le   a^2\eps +  (b^2/\eps)$  for   every  $a\ge0, b\ge 0$  and  $\eps>0$, we  deduce  that       (where  $\d    =  2\int_0^T  \r^2(s)  ds$  and $\d^\prime    = 2 \int_0^T  \s^2(s)  ds$)
\begin{align*}
 \mathbf{E}\int_0^T \!\!|Z_r^n|^2  dr  &+ \mathbf{E}\int_0^T\!\!\int_E\!\! |U_r^n(e)|^2 \lambda(de) dr   \le   \mathbf{E}|\xi|^2  
+  (1 + \d + \d^\prime) \mathbf{E}(G^2) \\
&+  \E\left[\left(\int_0^T   f_r  dr\right)^2\right] +  2 \E(G^2)\cdot\int_0^T \g(r)  dr \\
&+  \frac{1}{\d} \E\left[\left(\int_0^T   \r(r) |Z_r^n|  dr\right)^2\right] 
+ \frac{1}{\d^\prime} \E\left[\left(\int_0^T  \int_E\!\!   \s(r) |U_r^n(e)| \lambda(de)  dr\right)^2\right]. 
\end{align*}
Applying  H\"older's  inequality  in  the    two last   integrals,    we  obtain   
$$  \mathbf{E}\left[\int_0^T \!\!|Z_r^n|^2  dr  + \int_0^T\!\!\int_E\!\! |U_r^n(e)|^2 \lambda(de) dr\right]   \le  M + \, \frac{1}{2} \mathbf{E}\left[\int_0^T \! |Z_r^n|^2  dr  + \int_0^T \!\int_E\!\! |U_r^n(e)|^2 \lambda(de) dr\right] $$   
where    $$  M = \mathbf{E}|\xi|^2  
+  (1 + \d + \d^\prime) \mathbf{E}(G^2) 
+  \E\left[\left(\int_0^T   f_r  dr\right)^2\right] +  2 \E(G^2)\cdot\int_0^T \g(r)  dr>0$$   and    depend  only  on the  parameters   $f, \xi$  and  $T$.
Consequently    we   have  
$$\sup_{n\in\N}\mathbf{E}\int_0^T \!\!|Z_r^n|^2  dr    \le  2M  \quad   \mbox{and}\quad \sup_{n\in\N} \mathbf{E}\int_0^T\!\!\int_E\!\! |U_r^n(e)|^2 \lambda(de) dr \le  2M.$$
Let  us  define for   ${\cal W}  \in \{Y,   Z, U\}$  and  integers  $n,  m\ge 1,   \;   {\cal W} ^{n, m}  =  {\cal W} ^n - {\cal W} ^m$.
 
\noindent Applying  again  It\^o's formula, we  deduce   from  \eqref{eq-Yn}, 
\begin{align*}
\mathbf{E} |Y_t^{n, m} |^2  + \mathbf{E}\int_t^T |Z_r^{n, m}|^2  dr  &+ \mathbf{E}\int_t^T\int_E |U_r^{n, m}(e)|^2 \lambda(de) dr\\
   &=   2  \mathbf{E}\int_t^T Y_r^{n, m}\,  (f_n\left(r, \Theta_r^n\right) - f_m\left(r, \Theta_r^m\right) )dr, \quad   0\le t \le T.  
  \end{align*}  

Using  once   again    H\"older's  inequality   and   assumption  {\bf (H1)}  we  obtain   
\begin{align*}
\mathbf{E} |Y_0^{n, m} |^2  &+ \mathbf{E}\int_0^T |Z_r^{n, m}|^2  dr  + \mathbf{E}\int_0^T\int_E |U_r^{n, m}(e)|^2 \lambda(de) dr\\
   &\le  4  \mathbf{E}\int_0^T |Y_r^{n, m}| f_r  dr  + 4\left(\E(G^2)\right)^{1/2}\cdot \left(\E\left[\left(\int_0^T |Y_r^{n, m}| \g(r)  dr  \right)^2\right]\right)^{1/2} \\
   &+ 2 \sqrt{8 M} \cdot \left(\E\left[\int_0^T |Y_r^{n, m}|^2 \r^2(r)  dr \right]\right)^{1/2}  +  2 \sqrt{8 M} \cdot \left(\E\left[\int_0^T |Y_r^{n, m}|^2 \s^2(r)  dr \right]\right)^{1/2}    
  \end{align*}  
In particular      Lebesgue's   dominated  convergence     theorem  implies    that $\{Z^n\}$ (respectively  $\{U^n\}$)    is   a   Cauchy  sequence   in   $H^2(\R^d)$  (respectively  $L^2(\tl  \mu, \R)$). Hence  there   exists    $(Z, U)  \in H^2(\R^d)\times L^2(\tl  \mu, \R)$   such  that  
$$  \left\Vert Z^{n}  -  Z \right\Vert_{H^2(\R^d)}^2 \to  0   \quad   \mbox{and}    \quad   \left\Vert  U^{n}-  U \right\Vert_{L^2(\R)}^2 \to 0,  \quad \mbox{as}    \quad  n \to \infty, $$  
which  implies along   a  subsequence   if   necessary
$$ Z^n    \xrightarrow{H^2(\R^d)}   Z    \quad   \mbox{and}    \quad   U^n \xrightarrow{L^2(\R, \tl\mu)} U,   \quad    \mbox{as} \;   n \to  \infty.$$
Further  by   virtue   of   \eqref{convergence},   we  have 
$ f_n(s, Y^n_s, Z^n_s, U^n_s)     \xrightarrow{n\to  \infty}   f(s,   Y_s, Z_s, U_s),  \; \; 0\le s \le T$
 and   arguing    as   in   \cite[Theorem 1]{She-Lon-Tian},   we   deduce  that  
$$\lim_{n\to \infty}\E\left[\left(\int_0^T |f_n(r, \Theta^n_r)  -  f(r, \Theta_r)| dr\right)^2\right]  = 0 \quad
\mbox{and} 
 \quad\lim_{n\to \infty}\E\left(\sup_{0\le t \le T}|Y^n_t-  Y_t|^2\right)= 0.$$  This  is  enough  to  deduce     that     $Y\in S^2(\R)$. Letting  $n\to + \infty$  in   \eqref{eq-Yn}, we   prove  that  
$(Y_s, Z_s, U_s)_{0\le s\le T}$  is   solution    to  \eqref{backw}.   

Let   $(Y^\prime, Z^\prime, U^\prime) \in {\cal  B}^2(\R) $   be   a  solution   of   eq.  \eqref{backw}. Thanks   to     Theorem  \ref{comp-Lipsch},   we  have 
$$\forall  n \ge 1,   \quad   Y^n \le Y^\prime.$$ Letting     $n \to \infty$,  we  get  $Y\le Y^\prime.$    This    implies  that  $Y$ is  the   minimal  solution to  \eqref{backw}.  
\end{proof}
\section{Comparison theorem}
We  intend  to  prove   a   comparison  theorem  under  mild   conditions  on  the  drift  of  the  BSDEP. This     result  is  useful  for  the   proof  of   existence  and  uniqueness   of  solution.    

Let  us  introduce  the  following   assumptions {\bf(H2)} on  the   generator  $f$ where      $0<T\le +\infty$.\\ 
\noindent{\bf(H2.1)}: $f $ is   weakly  monotonic  in   $y$ i.e.  there   exists  $\g(\cdot)  :  [0, T] \to  \R_+$  satisfying    $\int_0^T  \g(t) dt < \infty  $  and    a  function  $\varrho  \in {\mathbf S}$ s.t.  $\int_{0^+}\frac{1}{\varrho(r)} dr = + \infty$  and  for   any   $(y, y^\prime)  \in \R^2,   \;  z\in \R^d,   \; u\in L^2(E, {\cal E}, \lambda, \R)$,  
\begin{equation} (y - y^\prime) \left(f(t,  y,  z, u)  -f(t,  y^\prime,  z, u)\right) \le   |y - y^\prime|  \g(t) \varrho(|y - y^\prime|) 
\end{equation}
and   we  assume  that     $\varrho(x)  \le   k(x+1) $ where    $k$ denotes   the  linear  growth  constant   of  $\varrho$. 
 
\noindent{\bf(H2.2)}:   $f$ is  uniformly continuous   in  $z$ and   there  exists  $\r(\cdot) : [0, T] \to  \R_+$  satisfying   $\int_0^T   \r^2(t)  dt < \infty$  and   $\phi \in  {\mathbf S} $ s.t.
$$  |f(t,  y,  z, u)  -f(t,  y,  z^\prime, u)| \le    \r(t) \phi (|z - z^\prime|) $$ 
and    we assume  that    $\phi(x)  \le   ax+b,   \;  a>0, b>0.$

\noindent{\bf(H2.3)}:  There   exists  $-1<c \le 0$  and   $C>0$, a  deterministic   function $\s(\cdot) : [0, T] \to  \R_+$  satisfying   $\int_0^T   \s^2(s)  ds < \infty$   and $\b :  \O\times[0, T]\times E \to  \R$,   ${\cal  P}\otimes {\cal E}-$measurable  satisfying  $c(1\wedge |e|) \le  \b_t \le   C(1\wedge |e|)$
such that   for all  $y\in \R,   \; z\in \R^d$ and    $u, u^\prime  \in (L^2(E, {\cal E}, \l, \R))^2$, 
\begin{equation}
 f(t,  y,  z, u)  - f(t,  y,  z, u^\prime)  \le    \s(t) \int_E \left(u(e)   - u^\prime(e)\right)\b_t(e) \l(de). 
\end{equation}   
Given two parameters  $(\xi^1, f^1)$  and  $(\xi^2, f^2)$,          we are  interested  in     two   one-dimensional  BSDEPs 
(with  $0\le t  \le T$)
\begin{align}
\label{backw-y1} Y^1_t&= \xi^1  \!\!+ \int_t^T\!\! f^1\left(r, \Theta^1_r\right) dr   - \int_t^T\!\!Z^1_r dW_r -  \int_t^T\!\!\int_E \!\!U^1_r(e) \tl \mu(dr, de),  \\
Y^2_t&= \xi^2  + \!\! \int_t^T\!\! f^2\left(r, \Theta^2_r\right) dr   - \int_t^T \!\!Z^2_r dW_r  - \!\! \int_t^T\!\!\int_E U^2_r(e) \tl \mu(dr, de),\label{backw-y2}
\end{align}
and    we   
assume    in addition  that

\noindent {\bf (H2.4)}: $\forall\,(t, y, z,  u),  \quad   f^1(t, y, z, u) \le  f^2(t, y, z, u) $  and  $\xi^1  \le  \xi^2$.  
  
We state  the   following  result   (see   \cite[Lemma 3]{She-Lon-Tian})    which  will  be   useful in  the  sequel
\begin{lemma}\label{Phi}
Let  $\Psi(\cdot) :  \R^+  \to   \R^+ $  be   a   nondecreasing   function  with linear  growth  which   means  
$$\exists   K>0  \; \;   \mbox{s.t.   for all}\;   \;   x\in \R^+,  \quad   \Psi(x)  \le  K(x+1).$$
Then   for   each  $n\ge 2 K$  we  have,   $\displaystyle \Psi\left(x\right)   \le   nx   +  \Psi\left(\frac{2K}{n}\right), \quad   x\ge 0.$
\end{lemma}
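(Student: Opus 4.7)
The plan is to split into two cases according to whether $x$ is small or large relative to the threshold $2K/n$, using monotonicity in one regime and the linear growth bound in the other.

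First I would handle the case $0 \le x \le 2K/n$. Since $\Psi$ is nondecreasing, $\Psi(x) \le \Psi(2K/n)$, and because $nx \ge 0$, the desired inequality $\Psi(x) \le nx + \Psi(2K/n)$ follows immediately with no use of the linear growth assumption.

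Next I would handle the case $x > 2K/n$. Here I would invoke the linear growth bound to write $\Psi(x) \le Kx + K$, so it suffices to show $Kx + K \le nx + \Psi(2K/n)$, equivalently $K - \Psi(2K/n) \le (n-K)x$. Since $n \ge 2K$ we have $n - K \ge K > 0$, and since $x > 2K/n$,
\begin{equation*}
(n-K)x > (n-K)\cdot\frac{2K}{n} = 2K - \frac{2K^2}{n} \ge 2K - K = K,
\end{equation*}
using $2K^2/n \le K$ once more from $n \ge 2K$. Combined with $\Psi(2K/n) \ge 0$, this yields $(n-K)x > K \ge K - \Psi(2K/n)$, which is what was needed.

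There is no real obstacle here; the only point requiring a touch of care is to pick the threshold correctly so that the linear growth term $Kx + K$ is absorbed by $nx$ with enough slack to cover the constant $K$, and that is precisely the role of the factor $2$ in the hypothesis $n \ge 2K$.
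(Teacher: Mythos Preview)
Your proof is correct. The paper does not actually supply its own proof of this lemma; it merely states the result and refers to \cite[Lemma~3]{She-Lon-Tian}. Your two-case argument---using monotonicity of $\Psi$ when $x\le 2K/n$ and the linear growth bound when $x>2K/n$---is the natural approach and is essentially the one given in the cited reference. A slightly shorter variant in the second case is to note directly that for $x>2K/n$ and $n\ge 2K$ one has $nx/2\ge Kx$ and $nx/2>K$, whence $\Psi(x)\le Kx+K\le nx\le nx+\Psi(2K/n)$; this avoids the intermediate computation with $(n-K)x$ but is logically equivalent to what you wrote.
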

Before  proving  the  main  statement  of  this  section,    let  us  recall   the  Girsanov  theorem   for  discountinuous   processes.    
If   ${\cal M}^2$  denotes the set  of   square  integrable  martingales,  we  can   define   thanks  to  the   martingale  representation  (see  \cite[Lemma 2.3]{Tang-Li}) a  mapping  
%
%
\begin{align*}
\Phi :  {\cal M}^2 &\rightarrow   H^2(\R^d) \times L^2(\tl \mu, \R)  \\
M&\mapsto  (\theta,  \upsilon)\quad \mbox{such  that}\quad    M_t  =  \int_0^t   \theta_s d W_s   +   \int_0^t\int_E  \upsilon_r(e)  \tl \mu(de, dr).
\end{align*} Let  
$
{\cal M} =  \bigg\{(M_t)_{t\ge 0} \in {\cal M}^2 \big\vert ||\th_sÊ||\le C,   \; \; \upsilon_s(x)>-1, \; \,  \;            |\upsilon_s(x)|\le C(1\wedge |x|), a.s.   \;   \mbox{with}\; \;  \Phi(M)=(\theta, \upsilon)\bigg\}
$.
For   $M \in {\cal M}$,   the  Dol\'eans-Dade exponential  of   $M$ is   defined   by   
$${\cal  E}  (M)_T =  e^{M_T - \frac{1}{2}\langle M^c\rangle_T} \prod_{0<s\le T}(1+ \Delta M_s)e^{- \Delta M_s}.$$ We  have 
\begin{theorem}[Girsanov Theorem]\label{girsanov}
Let  $(\bar Z, \bar U) \in  H^2(\R^d) \times L^2(\tl \mu, \R)$ and   $K_t =   \int_0^t   \bar Z_s d W_s   +   \int_0^t\int_E  \bar U_r(e)  \tl \mu(de, dr)$. If   $M \in {\cal  M}$  then    the  process  
$\tl K   =  K  -  \langle K, M \rangle $  is    a  martingale  under  the  probability  measure  $\tl  {\bf P}$  s.t $  d {\tl {\bf P}}/ d {\bf P} =  {\cal  E}(M)_T$.
\end{theorem}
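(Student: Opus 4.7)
The strategy is the standard Bayes-type equivalence: a c\`adl\`ag adapted process $X$ is a $\tl{\bf P}$-martingale if and only if $X\,{\cal E}(M)$ is a ${\bf P}$-martingale, provided ${\cal E}(M)$ is itself a true ${\bf P}$-martingale. I would therefore first verify that ${\cal E}(M)$ is a true ${\bf P}$-martingale so that $\tl{\bf P}$ is a bona fide probability measure, then use It\^o's product formula to show that $\tl K\,{\cal E}(M)$ is a ${\bf P}$-local martingale, and finally upgrade to a true martingale by an $L^2$ estimate.

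For the first point, ${\cal E}(M)$ is automatically a positive local martingale, and the condition $\upsilon_s(e)>-1$ built into the definition of ${\cal M}$ keeps $1+\Delta M_s>0$, so positivity is preserved. To promote it to a true martingale I would exploit the boundedness hypotheses in ${\cal M}$: $\|\th_s\|\le C$ yields $\langle M^c\rangle_t\le C^2 t$, while $|\upsilon_s(e)|\le C(1\wedge|e|)$ together with $\int_E(1\wedge|e|^2)\l(de)<\infty$ controls the compensated jump part. A Novikov-type argument (or the Lepingle--M\'emin criterion adapted to jump processes) then gives $\E[{\cal E}(M)_T]=1$, making $\tl{\bf P}$ a probability measure equivalent to ${\bf P}$.

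For the second point, write ${\cal E}:={\cal E}(M)$. Using $d{\cal E}_t={\cal E}_{t-}\,dM_t$ and the fact that $\langle K,M\rangle$ is continuous of finite variation (so it contributes no cross-variation), one has $d[\tl K,{\cal E}]_t={\cal E}_{t-}\,d[K,M]_t$, and integration by parts for semimartingales gives
\begin{align*}
d(\tl K_t\,{\cal E}_t) = \tl K_{t-}\,{\cal E}_{t-}\,dM_t + {\cal E}_{t-}\,dK_t + {\cal E}_{t-}\,d\bigl([K,M]-\langle K,M\rangle\bigr)_t.
\end{align*}
Since $[K,M]-\langle K,M\rangle$ is a ${\bf P}$-local martingale, $\tl K\,{\cal E}$ decomposes as a sum of stochastic integrals against $W$ and $\tl\mu$, hence is a ${\bf P}$-local martingale. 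I would then combine $(\bar Z,\bar U)\in H^2(\R^d)\times L^2(\tl\mu,\R)$ with the uniform bounds on $(\th,\upsilon)$ and Cauchy--Schwarz to get an $L^2({\bf P})$-bound on $\tl K\,{\cal E}$, yielding uniform integrability along a localizing sequence and so a true ${\bf P}$-martingale.

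The main obstacle I expect is the first step: the Dol\'eans-Dade exponential of a jump martingale is not automatically a true martingale under boundedness alone, and one must carefully use both the pointwise bound $|\upsilon_s(e)|\le C(1\wedge|e|)$ and the L\'evy-measure integrability $\int_E(1\wedge|e|^2)\l(de)<\infty$ to handle the infinite product $\prod_{0<s\le T}(1+\Delta M_s)e^{-\Delta M_s}$, particularly in the infinite horizon case where additional care is required to define the measure on ${\cal F}_\infty$ consistently.
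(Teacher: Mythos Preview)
The paper does not prove this theorem: it is introduced with ``let us recall the Girsanov theorem for discontinuous processes'' and stated without proof, then invoked in the proof of Proposition~\ref{eq-lineaire}. So there is no paper proof to compare against.

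Your outline is the standard route and is sound in its architecture. Two remarks on the details. First, the $L^2$ upgrade step is slightly optimistic as stated: to get $\E\bigl[(\tl K_t\,{\cal E}_t)^2\bigr]<\infty$ you need moment bounds on ${\cal E}(M)$, not merely $\E[{\cal E}(M)_T]=1$. On a finite horizon the pointwise bounds $\|\th_s\|\le C$ and $|\upsilon_s(e)|\le C(1\wedge|e|)$ do yield ${\cal E}(M)_t\in L^p$ for all $p<\infty$ via an exponential-moment computation, so your Cauchy--Schwarz argument goes through; just be explicit about this intermediate step. Second, as you note, the infinite-horizon case is delicate: the paper's application (Proposition~\ref{eq-lineaire}) only invokes Girsanov on finite intervals $[0,T]$ and then lets $T\to\infty$ after taking conditional expectations, so a uniform-in-$T$ martingale statement on $[0,\infty)$ is not actually needed for the paper's purposes. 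If you want to prove the theorem exactly as stated (with $T$ possibly infinite), you would need to argue consistency of the family $\{{\cal E}(M)_T\cdot{\bf P}\}_{T<\infty}$ and take a projective limit, which is more than the paper itself uses.
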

Here  is  the   main  result   of  this  section. 
\begin{theorem} \label{compare}Let   $0<T\le+\infty$. Assume   given     $f^1$,    $f^2$ and  $(\xi^1, \xi^2)  \in   (L^2(\O,  {\cal F}_T, \Prb))^2$ such that  {\bf (H2)}  holds. If   
$(Y^1_t, Z^1_t, U^1_t)_{0\le t \le T}$  and $(Y^2_t, Z^2_t, U^2_t)_{0\le t \le T}$  are   solutions   of eq.  \eqref{backw-y1}  and eq.  \eqref{backw-y2}  respectively,  then we   have
$$ \forall   \,  0\le t \le T,  \qquad  Y^1_t \le Y^2_t,   \quad   \P-a.s.$$ 
\end{theorem}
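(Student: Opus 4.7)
The plan is to adapt the classical linearization + Girsanov + Bihari strategy, already available in the Lipschitz regime of Theorem \ref{comp-Lipsch}, to the present weaker hypotheses. Put $\hat Y_t=Y^1_t-Y^2_t$, $\hat Z_t=Z^1_t-Z^2_t$, $\hat U_t=U^1_t-U^2_t$ and $\hat\xi=\xi^1-\xi^2$; by \textbf{(H2.4)}, $(\hat\xi)^+=0$, so the target reduces to $(\hat Y_t)^+\equiv 0$. I would first telescope the driver difference through the arguments of $f^1$:
\begin{align*}
\delta f(r):=f^1(r,\Theta^1_r)-f^2(r,\Theta^2_r)
&=\bigl[f^1(r,Y^1,Z^1,U^1)-f^1(r,Y^2,Z^1,U^1)\bigr]\\
&\quad+\bigl[f^1(r,Y^2,Z^1,U^1)-f^1(r,Y^2,Z^2,U^1)\bigr]\\
&\quad+\bigl[f^1(r,Y^2,Z^2,U^1)-f^1(r,Y^2,Z^2,U^2)\bigr]\\
&\quad+\bigl[f^1(r,\Theta^2_r)-f^2(r,\Theta^2_r)\bigr].
\end{align*}
The fourth bracket is $\le 0$ by \textbf{(H2.4)}; on $\{\hat Y_r>0\}$ the first is bounded by $\g(r)\varrho((\hat Y_r)^+)$ via \textbf{(H2.1)}, the second by $\r(r)\phi(|\hat Z_r|)$ via \textbf{(H2.2)} and, thanks to Lemma \ref{Phi}, by $\r(r)[n|\hat Z_r|+\phi(2a/n)]$ for every integer $n\ge 2a$, and the third by $\s(r)\int_E\hat U_r(e)\b_r(e)\l(de)$ via \textbf{(H2.3)}.

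Next, I would invoke Theorem \ref{girsanov} with $\theta^n_r=n\r(r)\hat Z_r/|\hat Z_r|$ on $\{\hat Z_r\ne 0\}$ and $\upsilon_r(e)=\s(r)\b_r(e)$. Since $|\theta^n_r|\le n\r(r)$ and $\upsilon_r(e)>-1$ (as $c>-1$, after a preliminary truncation of $\s$ if needed), the associated $M\in\mathcal{M}$, and Girsanov delivers an equivalent probability $\tl\P_n$ with $d\tl\P_n/d\P=\mathcal{E}(M)_T$ under which $\tl W_r:=W_r-\int_0^r\theta^n_s\,ds$ is a Brownian motion and $\tl{\tl\mu}(dr,de):=\tl\mu(dr,de)-\upsilon_r(e)\l(de)\,dr$ a compensated Poisson measure. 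Rewriting the BSDEP for $\hat Y$ in terms of $\tl W$ and $\tl{\tl\mu}$ absorbs the pieces $n\r(r)|\hat Z_r|$ and $\s(r)\int_E\hat U_r\b_r\l(de)$ into pure $\tl\P_n$-martingales (the residual $I_3$ vs.\ $\int\upsilon\hat U\l$ slack being $\le 0$ by \textbf{(H2.3)}). Applying the Meyer--Tanaka formula to $((\hat Y_t)^+)^2$ in the rewritten equation, then discarding from the right-hand side the Itô quadratic variation $\mathbf{1}_{\hat Y>0}|\hat Z|^2$ (which cancels the Young--inequality term used on the $\hat Z$ piece) and the nonnegative jump correction $\psi(y,u):=((y+u)^+)^2-(y^+)^2-2y^+u\ge 0$, yields after $\tl\P_n$-expectation
\[
\E^{\tl\P_n}\bigl[((\hat Y_t)^+)^2\bigr]\le 2\,\E^{\tl\P_n}\!\!\int_t^T\!\Bigl[\g(r)(\hat Y_r)^+\varrho\bigl((\hat Y_r)^+\bigr)+\r(r)\phi(2a/n)(\hat Y_r)^+\Bigr]dr.
\]

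Setting $\varphi_n(t):=\E^{\tl\P_n}[((\hat Y_t)^+)^2]$, a Jensen/Fubini step combined with the linear growth of $\varrho$ recasts this as a backward Bihari-type inequality $\varphi_n(t)\le\epsilon_n+\int_t^T\g(r)\,\tl\varrho(\varphi_n(r))\,dr$, for some $\tl\varrho\in\mathbf{S}$ still satisfying $\int_{0^+}du/\tl\varrho(u)=+\infty$ and a residual $\epsilon_n\to 0$ as $n\to+\infty$; Bihari's lemma then forces $\varphi_n(t)\to 0$, and equivalence $\tl\P_n\sim\P$ delivers $Y^1_t\le Y^2_t$ $\P$-a.s. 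The main technical obstacle is the Girsanov step when $T=+\infty$: one has to secure simultaneously $M\in\mathcal{M}$ (so $|\theta^n|$ bounded and $\upsilon_r(e)>-1$ uniformly, typically requiring a localization or truncation of $\s$) and $\mathcal{E}(M)_T$ being a true uniformly integrable martingale, where the integrability $\int_0^T(\r^2+\s^2)\,dt<\infty$ coming from \textbf{(H2.2)}--\textbf{(H2.3)} is essential; the $n$-dependence of the measure $\tl\P_n$ must also be tracked carefully when passing to the Bihari limit.
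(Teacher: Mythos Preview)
Your overall strategy---telescoping the driver through $f^1$, invoking Lemma~\ref{Phi} on the $z$-increment, absorbing the linearized $z$- and $u$-pieces via a Girsanov change of measure, and closing with a Bihari-type argument---is exactly the paper's. Two differences remain: the paper applies Tanaka--Meyer to $(\widehat Y_t)^+$ itself rather than to its square, and, more importantly, it takes the \emph{conditional} expectation $\tl\E[\,\cdot\mid{\cal F}_t]$ under the Girsanov measure instead of the full expectation.

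This second difference is where your argument has a real gap. You reach $\varphi_n(t):=\E^{\tl\P_n}\bigl[((\widehat Y_t)^+)^2\bigr]\to 0$ as $n\to\infty$, but the probability $\tl\P_n$ itself depends on $n$ (through $\theta^n$). Mere equivalence $\tl\P_n\sim\P$ for each fixed $n$ only transfers null sets; it gives no uniform lower control on the densities ${\cal E}(M_n)_T$, so from $\varphi_n(t)\to 0$ you cannot conclude $(\widehat Y_t)^+=0$ $\P$-a.s.\ (the mass of $\tl\P_n$ could drift away from any fixed set $\{(\widehat Y_t)^+>\eps\}$ as $n\to\infty$). You flag this yourself in your last sentence, but do not resolve it. The paper's fix is elegant and costs nothing: since $(\widehat Y_t)^+$ is ${\cal F}_t$-measurable, $\tl\E\bigl[(\widehat Y_s)^+\mid{\cal F}_t\bigr]\big|_{s=t}=(\widehat Y_t)^+$ regardless of which equivalent measure is used, so the Bihari bound (Lemma~5 in \cite{She-Lon-Tian}) becomes a \emph{pathwise} inequality $(\widehat Y_t)^+\le h(a_n)$ valid $\tl\P_n$-a.s., hence $\P$-a.s., and now letting $n\to\infty$ is harmless. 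Your squared version would work just as well if you conditioned on ${\cal F}_t$ instead of taking full expectation. A minor bonus of the paper's choice of $(\widehat Y_t)^+$ over its square is that the Jensen step needs only concavity of $\varrho$ directly, whereas with the square you must first build a suitable $\tl\varrho$ controlling $x\varrho(x)$ in terms of $x^2$.
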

\begin{proof} We  assume  $d =1$. Putting  
\begin{equation}\label{notation} \widehat{\Theta}_t  = (\widehat{Y}_t, \widehat{Z}_t, \widehat{U}_t)  = (Y^1_t-Y^2_t, Z^1_t-Z^2_t , U^1_t-U^2_t),  \quad \widehat{\xi}  =  \xi^1-\xi^2,\end{equation}  then  $(\widehat{\Theta}_t ) _{0\le t \le T}$ satisfies   the  BSDEP ($ 0\le t \le T$)
\begin{equation}
\label{backw-Theta}
\widehat{Y}_t =\widehat \xi  + \int_t^T \left[f^1\left(r, \Theta^1_r\right) -f^2\left(r, \Theta^2_r\right) \right] dr  - \int_t^T\widehat{Z}_r dW_r  -  \int_t^T\int_E \widehat{U}_r(e) \tl \mu(dr, de).
\end{equation}
 Tanaka-Meyer's  formula yields (where  $x^+=\max(x, 0)$)
\begin{align}
\label{Yt+}
\widehat{Y}_t^+ &\le  \widehat{\xi}^+ +  \int_t^T   \mathds{1}_{\{\widehat{Y}_r^+>0\}}\left[f^1\left(r, \Theta^1_r\right) -f^2\left(r, \Theta^2_r\right) \right] dr 
-\int_t^T \mathds{1}_{\{\widehat{Y}_r^+>0\}}  \widehat{Z}_r dW_r  \nonumber \\
&- \int_t^T \int_E  \mathds{1}_{\{\widehat{Y}_r^+>0\}} \widehat{U}_r(e) \widetilde \mu(de, dr), \quad   0\le t \le T.   
\end{align}
Further      we   have   
$$
f^1\left(r, \Theta^1_r\right) -f^2\left(r, \Theta^2_r\right) =  [f^1\left(r, \Theta^1_r\right) -f^1\left(r, \Theta^2_r\right)]  + [f^1\left(r, \Theta^2_r\right) -f^2\left(r, \Theta^2_r\right)]
$$
and      assumption   {\bf (H2.4)}  implies  that  the   right-hand  side   is   less  than   
\begin{align*}
[f^1\left(r, \Theta^1_r\right) -f^1\left(r, Y^2_r, Z^1_r, U^1_r\right)] &+  [f^1\left(r, Y^2_r, Z^1_r, U^1_r\right)  - f^1\left(r, Y^2_r, Z^2_r, U^1_r\right) ]   \\
&+ [f^1\left(r, Y^2_r, Z^2_r, U^1_r\right) - f^1\left(r, \Theta^2_r\right)].   
\end{align*}
Hence   applying  {\bf (H2.1)}   and    {\bf (H2.3)}   we  deduce    that  
\begin{align*}
\mathds{1}_{\{\widehat{Y}_r^+>0\}}\left[f^1\left(r, \Theta^1_r\right) -f^2\left(r, \Theta^2_r\right) \right]  &\le  
\g(r)  \varrho(\widehat{Y}_r^+)   +\mathds{1}_{\{\widehat{Y}_r^+>0\}}   \r(r)  \phi( |\widehat{Z}_r|)\\
&+   \int_E \mathds{1}_{\{\widehat{Y}_r^+>0\}}  \widehat{U}_r(e)\b_r(e) \l(de).
\end{align*}
By   Lemma \ref{Phi}    we  have  (with    $\Psi(\cdot) =  \phi(\cdot);   \, K=c= a+b$) 
$$ \mathds{1}_{\{\widehat{Y}_r^+>0\}}   \r(r)  \phi( |\widehat{Z}_r|) \le \mathds{1}_{\{\widehat{Y}_r^+>0\}}   n \r(r) |\widehat{Z}_r|
+ \mathds{1}_{\{\widehat{Y}_r^+>0\}}    \r(r)    \phi\left(\frac{2c}{n}\right),  \quad   n\ge 2c.$$
Putting  pieces   together,   we  derive  from  \eqref{Yt+}
\begin{equation}
\label{Yt+1}
\widehat{Y}_t^+ \le a_n +  \int_t^T  \g(r)  \varrho(\widehat{Y}_r^+)  dr +   \tl  K_t
\end{equation}  
where
\begin{align*} 
 \tl  K_t=\int_t^T\bigg[\mathds{1}_{\{\widehat{Y}_r^+>0\}} \widehat{Z}_r \left(\frac{n \r(r) \widehat{Z}_r}{|\widehat{Z}_r|}\mathds{1}_{\{\widehat{Z}_r\neq 0\}}\right)
    &+   \int_E\mathds{1}_{\{\widehat{Y}_r^+>0\}}    \widehat{U}_r(e)  \b_r(e)  \l(de) \bigg] dr  \\ -\int_t^T \mathds{1}_{\{\widehat{Y}_r^+>0\}}  \widehat{Z}_r   d W_r 
    &- \int_t^T \int_E  \mathds{1}_{\{\widehat{Y}_r^+>0\}} \widehat{U}_r(e) \widetilde \mu(de, dr) 
\end{align*}
and  (where  $b$  is  given  in  (H2.2))    
$$  a_n  =   \mathds{1}_{b\neq 0} \phi\left(\frac{2c}{n}\right)\cdot   \int_0^T  \g(r)  dr  \;   \xrightarrow{n\to\infty} 0.
$$
Define    
\begin{align*}
M_t  &=  \int_0^t  \left(\frac{n \r(r) \widehat{Z}_r}{|\widehat{Z}_r|}\mathds{1}_{\{\widehat{Z}_r\neq 0\}}\right)  d W_r   +  \int_0^t \int_E \b_r(e)  \widetilde \mu(de, dr), \quad   0\le t\le T, \\
K_t&=  \int_0^t \mathds{1}_{\{\widehat{Y}_r^+>0\}}  \widehat{Z}_r   d W_r 
    + \int_0^t \int_E  \mathds{1}_{\{\widehat{Y}_r^+>0\}} \widehat{U}_r(e) \widetilde \mu(de, dr),  \quad   0\le t\le T.
 \end{align*} 
By  Theorem \ref{girsanov}, it follows  that  $\tl K_t$ is   a  martingale  under  the  probability  measure ${\bf \tl P} =  {\cal E}(M)_T \cdot {\bf P}$.     Hence   taking $\tl\E\left(\cdot\vert {\cal F}_t\right) $ the   conditional  expectation   given  ${\cal F}_t$   under   the  probability  measure  $\tl{\bf P}$, and   taking  in account   $\varrho $ is  concave, we deduce  that
$$\tl \E \left(\widehat{Y}_s^+\vert {\cal F}_t\right)  \le  a_n  +  \int_s^T \g(r) \varrho\left(\tl\E \left[\widehat{Y}_r^+\vert {\cal F}_t\right]\right) dr,   \quad    t\le s \le T.$$
Thus     Lemma 5  in   \cite{She-Lon-Tian}   implies that   $  \widehat{Y}_t^+=0$    
which  is  true  if  and  only  if $Y_t^1 \le  Y_t^2.$
\end{proof}
The    following corollary  is   immediate.  
\begin{corollary}\label{cor-existence}
Let  $0<T\le +\infty$. If   $\xi\in L^2(\O, {\cal  F}, {\bf P})$   and  $f$  satisfies   {\bf (H2)},   then    the  BSDEP  \eqref{backw}  with   parameters  $(\xi, f, T)$  has  at  most   one  solution.
\end{corollary}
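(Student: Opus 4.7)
The strategy is to extract uniqueness directly from the comparison theorem just proved (Theorem \ref{compare}). Given two solutions $(Y^1, Z^1, U^1)$ and $(Y^2, Z^2, U^2)$ of \eqref{backw} with the \emph{same} parameters $(\xi, f, T)$, I would simply feed them into Theorem \ref{compare} with $f^1 = f^2 = f$ and $\xi^1 = \xi^2 = \xi$. Hypotheses \textbf{(H2.1)}--\textbf{(H2.3)} are imposed on $f$, and the ordering hypothesis \textbf{(H2.4)} is trivially satisfied (as an equality) in both directions. Applying the theorem once yields $Y^1_t \le Y^2_t$ a.s., and applying it a second time with the roles swapped yields $Y^2_t \le Y^1_t$ a.s. Combining these, $Y^1_t = Y^2_t$ for all $t \in [0,T]$, $\mathbf{P}$-a.s.

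To recover the equality of the $Z$ and $U$ components, I would subtract \eqref{backw-y1} from \eqref{backw-y2} using $\widehat{Y}_t \equiv 0$. This gives, for every $0 \le t \le T$,
\begin{equation*}
\int_0^t \widehat{Z}_r\, dW_r + \int_0^t\!\!\int_E \widehat{U}_r(e)\, \widetilde{\mu}(dr, de) \;=\; \int_0^t \bigl[f(r, \Theta^1_r) - f(r, \Theta^2_r)\bigr]\, dr,
\end{equation*}
with the notation $\widehat{Z} = Z^1-Z^2$, $\widehat{U} = U^1-U^2$. The left-hand side is a local martingale in $\mathcal{M}^2$ starting at $0$, while the right-hand side is an absolutely continuous, hence predictable, finite variation process starting at $0$. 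By the uniqueness of the semimartingale decomposition, both sides must vanish identically. Using the orthogonality of the Brownian stochastic integral and the compensated jump integral (which live in orthogonal subspaces of $\mathcal{M}^2$), each of the two martingale terms vanishes separately. Taking expectations of the quadratic variations yields $\mathbf{E}\int_0^T |\widehat{Z}_r|^2\, dr = 0$ and $\mathbf{E}\int_0^T\!\!\int_E |\widehat{U}_r(e)|^2\,\lambda(de)\,dr = 0$, so $Z^1 = Z^2$ in $H^2(\mathbf{R}^d)$ and $U^1 = U^2$ in $L^2(\widetilde{\mu},\mathbf{R})$.

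The argument is essentially a one-line consequence of Theorem \ref{compare}, so I do not expect any real obstacle. The only point that requires minor care is the second step: verifying that the right-hand side integrand $f(\cdot, \Theta^1_\cdot) - f(\cdot, \Theta^2_\cdot)$ is actually integrable in $r$ so that the identification of a finite variation process with a square-integrable martingale is legitimate; but this follows from \textbf{(H2.1)}--\textbf{(H2.3)}, the linear growth bounds $\varrho(x)\le k(x+1)$, $\phi(x)\le ax+b$, and the $H^2 \times L^2(\widetilde\mu)$ regularity of $(Z^i, U^i)$. No additional machinery beyond what has already been used in the proof of Theorem \ref{compare} is required.
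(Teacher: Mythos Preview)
Your proposal is correct and matches the paper's approach: the paper states that the corollary is ``immediate'' from Theorem \ref{compare}, and your argument---applying the comparison theorem in both directions to force $Y^1=Y^2$, then identifying $Z$ and $U$ via the martingale representation---is precisely the standard way to unpack that word. The paper does not spell out the second step for $(Z,U)$, but your treatment of it is routine and accurate.
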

\section{Existence   and   uniqueness of solution}
Thanks  to  the  results  establish  in the  previous  section, we  investigate  in  this  section    the solvabilty  of  our   equations under   weaker conditions  on  the  generator.   
 
Assume  that   $f : \O\times [0, T] \times  \R \times  \R^{d} \times L^2(E, {\cal E}, \lambda, \R) \to  \R $  is  uniformly    continuous  with  respect to  its   variables    and   satisfies  {\bf (H3)} :   
\begin{align*} 
|f(t,  y,  z, u)  -f(t,  y^\prime,  z^\prime, u)| &\le  \g(t) \varrho(|y - y^\prime|) +  \r(t) \phi (|z - z^\prime|), \\
f(t,  y,  z, u)  -f(t,  y,  z, u^\prime)&\le \s(t) \int_E \left(u(e)   - u^\prime(e)\right)\b_t(e) \l(de) 
\end{align*} 
 where    $\g,  \r, \s, \phi $  and   $\b$  are    as  in  {\bf (H2)}.
 
 We  claim 
 \begin{theorem}
 Let  $0<T\le +\infty$   and  $\xi  \in L^2(\O, {\cal F},\P )$. If $f$  satisfies     {\bf (H3)}  and    {\bf (A1)}  then    equation   \eqref{backw}  admits  a   unique solution. 
 \end{theorem}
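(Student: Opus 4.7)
The plan is to deduce the theorem directly from Theorem~\ref{minimal} and Corollary~\ref{cor-existence}, by verifying that the hypothesis \textbf{(H3)}$+$\textbf{(A1)} implies both \textbf{(H1)} (to get existence of a minimal solution) and \textbf{(H2)} (to get uniqueness). Uniqueness is the cleaner half, so I would dispose of it first.

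For uniqueness, I would check that \textbf{(H3)} implies each of \textbf{(H2.1)}--\textbf{(H2.3)}. The estimate in \textbf{(H3)} gives
$(y-y')\bigl(f(t,y,z,u)-f(t,y',z,u)\bigr)\le |y-y'|\cdot|f(t,y,z,u)-f(t,y',z,u)|\le |y-y'|\g(t)\varrho(|y-y'|)$,
which is \textbf{(H2.1)}; the $z$-estimate in \textbf{(H3)} is exactly \textbf{(H2.2)}; and the $u$-estimate in \textbf{(H3)} is \textbf{(H2.3)} verbatim. Corollary~\ref{cor-existence} then rules out more than one solution.

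For existence, I would show that \textbf{(H3)}$+$\textbf{(A1)} implies \textbf{(H1)}. Continuity \textbf{(H1.3)} is immediate from the assumed uniform continuity. For \textbf{(H1.2)} I decompose
$f(t,y,z,u)-f(t,0,0,0) = \bigl[f(t,y,z,u)-f(t,0,z,u)\bigr]+\bigl[f(t,0,z,u)-f(t,0,0,u)\bigr]+\bigl[f(t,0,0,u)-f(t,0,0,0)\bigr]$,
apply \textbf{(H3)} to the first two brackets, and use the Remark following \textbf{(A3)} (which upgrades the one-sided \textbf{(H2.3)} estimate to the two-sided $\widetilde c\,\s(t)$-Lipschitz bound $|f(t,0,0,u)-f(t,0,0,0)|\le \widetilde c\,\s(t)\,\|u\|$) on the last one. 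This yields
$|f(t,y,z,u)|\le |f(t,0,0,0)|+\g(t)\varrho(|y|)+\r(t)\phi(|z|)+\widetilde c\,\s(t)|u|,$
and invoking the linear growth $\varrho(x)\le k(x+1)$ and $\phi(x)\le ax+b$ puts this in the form of \textbf{(H1.2)} after renaming $k\g$, $a\r$, $\widetilde c\,\s$ as the new coefficients and setting $f_t:=|f(t,0,0,0)|+k\g(t)+b\r(t)$. The integrability condition \textbf{(H1.1)} follows from \textbf{(H2.1)}--\textbf{(H2.3)}. Theorem~\ref{minimal} then supplies a minimal solution $(Y,Z,U)\in\mathcal{B}^2(\R)$, and by the uniqueness established above this is the unique solution.

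The main obstacle is checking $\E\bigl[\bigl(\int_0^T f_t\,dt\bigr)^2\bigr]<\infty$ when $T=+\infty$. The term involving $|f(t,0,0,0)|$ is controlled by \textbf{(A1)}; $\int_0^T\g(t)\,dt<\infty$ is in \textbf{(H2.1)}. The delicate piece is $\int_0^T\r(t)\,dt$, which is not automatic from $\int_0^T\r^2(t)\,dt<\infty$ on an infinite horizon; for $T<\infty$ a Cauchy--Schwarz estimate disposes of it, and for $T=+\infty$ one typically reads the hypotheses as implicitly including $\int_0^\infty\r(t)\,dt<\infty$ (or one works with the modulus-of-continuity case $b=0$). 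Once this technical point is settled, the conjunction of Theorem~\ref{minimal} and Corollary~\ref{cor-existence} yields the claim.
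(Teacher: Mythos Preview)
Your proposal is correct and follows essentially the same route as the paper: uniqueness via the implication \textbf{(H3)}$\Rightarrow$\textbf{(H2)} and Corollary~\ref{cor-existence}, existence via the implication \textbf{(H3)}$+$\textbf{(A1)}$\Rightarrow$\textbf{(H1)} and Theorem~\ref{minimal}, with the same definition $f_t=k\g(t)+b\r(t)+|f(t,0,0,0)|$. The integrability concern you raise about $\int_0^\infty\r(t)\,dt$ is a genuine subtlety that the paper's proof also leaves implicit.
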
 
 \begin{proof}
 Uniqueness   follows  from  Corollary  \ref{cor-existence}  since    {\bf (H3)}  implies     {\bf (H2)}.  Moreover from     {\bf (H3)}    one  can  derive  that
 \begin{align*}
|f(\o, t,  y,  z, u)| &\le  \g(t) \varrho(|y|) +  \r(t) \phi (|z|) +\tl  c\,  \s(t) \left(\int_E \left |u(e)\right|^2 \l(de)\right)^{1/2}  +    |f(\o, t,  0, 0, 0)|\\
&\le f_t +   k\g(t)|y|  +  a \rho(t)|z|+  \tl c \,  \s(t) |u| 
 \end{align*}
 where    $f_t  =   k\g(t)+ b \rho(t) +   |f(\o, t,  0, 0, 0)|$. Hence   Theorem  \ref{minimal}  ensures  existence    of   a  minimal solution.  This   completes   the  proof. 
 \end{proof}
\section{Proof  of Theorem \ref{comp-Lipsch}}\label{proofs}

This  section  is  devoted to  establishing the comparison theorem under   assumptions {\bf (A1)-(A5)} and  a  horizon   time   $T$  satisfying $0<T\le +\infty$. We   consider   the case  $T=+\infty$  since  the   result  for  $T<\infty$ is   well   known.    The  key  point  is  to  expressed the  difference  of two  solutions   as   a  conditional  expectation  in  a  suitable  probability   space. To  do  this  we need to apply  Girsanov   theorem. This is the guiding line of the following computations.
To begin with, let us establish the following result.

\begin{proposition}
 \label{eq-lineaire} Let  $(a_t)_{t\ge 0}, (b_t)_{t\ge 0}$   be adapted  processes  satisfying a.s.   $|a_t|\le \g(t);   \; |b_t|\le \r(t)$. 
Assume  that  there   exist  a constant  $ C_{\ref{eq-lineaire}}>0$ and      a  process  $(\a_t)_{t\ge 0}$   satisfying  $\a_t(e) >-1$  and 
 $|\a_t(e)|   \le C_{\ref{eq-lineaire}}(1\wedge |e|) \, a.s.$   and   an adapted   process   $\{\f_t\}_{t\ge 0}$   satisfying  $\E\left[\left(\int_0^\infty|\f_t|dt\right)^2\right] <\infty$. 
If   $(Y_t, Z_t, U_t)$  is   solution to      the  BSDEP   
 \begin{align}
\label{backw-lineaire} 
Y_t&= \xi  \!\!+\int_t^\infty \!\! \left(\f_s +  a_s Y_s +  b_s Z_s  +  \int_E \a_s(e) U_s(e) \l(de) \right) ds \nonumber\\
&- \int_t^\infty\!\!Z_s dW_s -  \int_t^\infty\!\!\int_E \!\!U_s(e) \tl \mu(ds, de),  \quad  t\ge 0.
\end{align}
then  there   exists   a  probability  measure  $\tl  \P$  such  that  
$$ Y_t =  
\tl\E\left[\xi  \exp\left(\int_t^\infty  a_s   ds\right) +  \int_t^\infty \f_s\exp\left(\int_t^s   a_r  dr\right)ds\bigg\vert {\cal F}_t  \right], \quad   t\ge 0, 
$$
where   $\tl\E$ stands   for  the   expectation  under  $\tl\P$.
\end{proposition}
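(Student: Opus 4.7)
\medskip

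\noindent\textbf{Proof plan.} The strategy is classical: combine Girsanov's theorem (to absorb the $b_s Z_s$ and $\int_E \alpha_s(e)U_s(e)\lambda(de)$ terms into the driving noise) with an Itô product against the exponential discount factor (to absorb the $a_s Y_s$ term), and then take conditional expectations against the resulting martingales. I carry out these three steps in order, then pass to the limit to treat the infinite horizon.

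\medskip

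\noindent\textbf{Step 1 (Change of measure).} I would first introduce the Girsanov martingale
\[
M_t \;=\; \int_0^t b_s\,dW_s \;+\; \int_0^t\!\!\int_E \alpha_s(e)\,\tilde\mu(ds,de),\qquad t\ge 0,
\]
and verify that $M\in\mathcal M$ in the sense of Theorem \ref{girsanov}: $\alpha_s(e)>-1$ and $|\alpha_s(e)|\le C_{\ref{eq-lineaire}}(1\wedge|e|)$ come from the hypothesis, while the required bound on $b_s$ is supplied by $|b_s|\le\rho(s)$ together with a standard localisation (work first on $[0,N]$ where $\rho$ may be truncated, and pass to the limit at the end using dominated convergence, which is licit thanks to the integrability assumption on $\varphi$ and $\xi\in L^2$). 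Let $\tilde{\mathbf P}$ be the probability with density $\mathcal E(M)_\infty$ on $\mathcal F_\infty$; by Theorem \ref{girsanov} the process
\[
\tilde W_t \;=\; W_t - \int_0^t b_s\,ds
\]
is a $\tilde{\mathbf P}$-Brownian motion, and the compensated measure under $\tilde{\mathbf P}$ is $\bar\mu(dt,de):=\tilde\mu(dt,de)-\alpha_t(e)\lambda(de)\,dt$. Rewriting \eqref{backw-lineaire} with these new objects eliminates the $b_s Z_s$ and $\int_E\alpha_s(e)U_s(e)\lambda(de)$ terms and leaves
\[
Y_t \;=\; \xi + \int_t^\infty\!\!(\varphi_s + a_s Y_s)\,ds \;-\; \int_t^\infty\!\!Z_s\,d\tilde W_s \;-\; \int_t^\infty\!\!\int_E U_s(e)\,\bar\mu(ds,de).
\]

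\medskip

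\noindent\textbf{Step 2 (Exponential discounting).} Set $\Gamma_t=\exp\bigl(\int_0^t a_s\,ds\bigr)$, which is absolutely continuous with $d\Gamma_t=a_t\Gamma_t\,dt$ and is bounded by $\exp(\int_0^\infty\gamma(s)ds)<\infty$ thanks to (A4). Applying Itô's product formula to $\Gamma_t Y_t$ under $\tilde{\mathbf P}$ cancels the $a_sY_s$ drift, producing
\[
d(\Gamma_t Y_t) \;=\; -\Gamma_t\varphi_t\,dt + \Gamma_t Z_t\,d\tilde W_t + \int_E \Gamma_t U_t(e)\,\bar\mu(dt,de).
\]
Integrating from $t$ to a finite $T$, reorganising, and then letting $T\to\infty$ (using $Y\in\mathcal S^2$ and $\Gamma$ bounded) gives
\[
\Gamma_t Y_t \;=\; \Gamma_\infty\xi + \int_t^\infty\!\!\Gamma_s\varphi_s\,ds \;-\; \int_t^\infty\!\!\Gamma_s Z_s\,d\tilde W_s \;-\; \int_t^\infty\!\!\int_E \Gamma_s U_s(e)\,\bar\mu(ds,de),
\]
where $\Gamma_\infty=\exp(\int_0^\infty a_s\,ds)$.

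\medskip

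\noindent\textbf{Step 3 (Conditional expectation and conclusion).} Taking $\tilde{\mathbf E}[\,\cdot\,|\mathcal F_t]$ makes the two stochastic integrals vanish, provided they are genuine $\tilde{\mathbf P}$-martingales. Dividing by $\Gamma_t$ and using $\Gamma_s/\Gamma_t=\exp(\int_t^s a_r dr)$ gives exactly
\[
Y_t \;=\; \tilde{\mathbf E}\!\left[\xi\exp\!\Bigl(\int_t^\infty a_s\,ds\Bigr) + \int_t^\infty\!\!\varphi_s\exp\!\Bigl(\int_t^s a_r\,dr\Bigr)ds\;\bigg|\;\mathcal F_t\right].
\]

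\medskip

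\noindent\textbf{Main obstacle.} The delicate point is the infinite-horizon handling: justifying that the Girsanov density $\mathcal E(M)_\infty$ is a true $\mathbf P$-martingale with well-defined limit, and that the stochastic integrals against $d\tilde W$ and $\bar\mu$ are genuine martingales under $\tilde{\mathbf P}$. The first is handled by a Novikov-type argument after truncating $\rho$ (the statement of Theorem \ref{girsanov} is tailored to bounded integrands, so a localisation is required); the second requires $\tilde{\mathbf E}\!\int_0^\infty \Gamma_s^2|Z_s|^2 ds<\infty$ and similarly for $U$, which one verifies by combining the $L^2$ bounds on $(Z,U)$ under $\mathbf P$, the boundedness of $\Gamma$, and an $L^p$ estimate on the density $\mathcal E(M)_\infty$ (e.g. via Hölder's inequality and the uniform $L^p$-integrability granted by the boundedness of $\alpha$ and the integrability of $\rho^2$).
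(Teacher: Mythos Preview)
Your proof follows essentially the same route as the paper: a Girsanov change of measure via the local martingale $M_t=\int_0^t b_s\,dW_s+\int_0^t\!\int_E\alpha_s(e)\,\tl\mu(ds,de)$ to absorb the $b_sZ_s$ and $\int_E\alpha_s U_s\,\l(de)$ terms, then It\^o's formula for $\Gamma_t Y_t$ with $\Gamma_t=\exp(\int_0^t a_s\,ds)$ to kill the $a_sY_s$ drift, a conditional expectation under $\tl\P$, and a passage $T\to\infty$. The only ordering difference is that the paper takes $\tl\E[\,\cdot\,|\mathcal F_t]$ at finite $T$ first (obtaining $Y_t=\tl\E[\,Y_T\exp(\int_t^T a_s\,ds)+\int_t^T\f_s\exp(\int_t^s a_r\,dr)\,ds\,|\,\mathcal F_t\,]$) and only then lets $T\to\infty$, whereas you pass to the limit before conditioning; both are legitimate. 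Your discussion of the ``main obstacle'' (localising $\rho$ so that Theorem~\ref{girsanov} applies, and checking the $\tl\P$-martingale property of the stochastic integrals) is more scrupulous than the paper's own argument, which simply asserts $M\in\mathcal M^2$ and proceeds formally.
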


\begin{proof}Thanks   to assumptions  on  $b$  and  $\a$,  it  is   easily  seen   that  the  stochastic process $M=(M_t)_{0\le t\le T}$  given  by  
$$  M_t  = \int_0^t b_s  dW_s +  \int_0^t \int_E  \a_s(e)  \tl \mu(ds, de),   \quad   0\le t \le T$$  belongs  in   ${\cal  M}^2$. So let  ${\cal E}(M)_t$   be  the   Dol\'eans-Dade    exponential  of $M$.   By    Theorem \ref{girsanov}  there   exists  a  probability measure  $\tl\P$  such  that  

\begin{align*}
\frac{d \tl\P}{d\P}\big\vert {\cal F}_t   &=  {\cal E} (M)_t\\
&= \exp\left(\int_0^t b_s  dW_s  -  \frac{1}{2} \int_0^t b_s^2  ds\right) \times \\ 
&\prod_{0\le s \le t}\left(1 +  \int_E \a_r(e) \mu(\{s\}, dr)\right)  \exp\left(-  \int_0^t\int_E \a_r(e) \l(de) dr\right).
\end{align*} 
Moreover  the  process  $W_t^*  = W_t   -  \int_0^t b_r dr  $   is   a   Brownian   Motion  under   $\tl \P$   and   $\mu^*(dr, de)  =  \tl \mu(dr, de) -  \a_r(e) \l(de) dr $ is   $\tl \P-$  martingale.  Let  $0<T<\infty$  be fix. One can see   that   $Y_t$ can  be     rewritten  as  
$$Y_t= Y_T +\int_t^T \!\! \left(\f_r +  a_ r Y_r\right) dr - \int_t^T\!\!Z_r dW^*_r -  \int_t^T\!\!\int_E \!\!U_r(e) \mu^*(dr, de),  \quad  0\le t  \le T<\infty.  $$  
Define   $\G_t(\o) = e^{\int_0^t a_r(\o)  dr}, \; \o\in \O, \; t\ge 0$.  It  follows    from    It\^o's   formula
 \begin{align*}
  \G_{t}  Y_t  &=  \G_{T}  Y_T  -  \int_t^T \! \G_{r^-} d Y_r  -  \int_t^T \! Y_{r^-}  d \G_{r}  - \int_t^T \! d [Y,  \G]_r \\
  %
  %
  &=   \G_{T}  Y_T +  \int_t^T \! \G_{r^-}  \f_r  d r   -   \int_t^T \! \G_{r^-}  Z_r  d W_r^* -  \int_t^T  \int_E \! \G_{r^-}
  U_r(e)\mu^*(dr, de)
   \end{align*}
   Taking conditional  expectation $\tl\E (\cdot| {\cal F}_t)$,    we  deduce  that for   any  $0<t\le T<\infty$, 
   $$ \G_t Y_t  = \tl \E\left[\G_{T} Y_T +  \int_t^T \f_r \G_{r^-} dr\bigg\vert {\cal F}_t  \right] $$
   which  implies   $$ Y_t = \tl\E\left[ Y_T\exp\left(\int_t^T  a_s  ds\right) +  \int_t^T \f_s\exp\left(\int_t^s   a_r dr\right)ds\bigg\vert {\cal F}_t  \right].$$
Letting   $T\to  \infty$,   we  deduce  that  $$ Y_t = \tl\E\left[\xi  \exp\left(\int_t^\infty  a_s  ds\right) +  \int_t^\infty \f_s\exp\left(\int_t^s   a_r  dr\right)ds\bigg\vert {\cal F}_t  \right].$$

 \end{proof}
\begin{lemma}
Assume   given     $f^1, \, f^2$ and  $(\xi^1, \xi^2)  \in   (L^2(\O,  {\cal F}_T, \Prb))^2$ such that   {\bf (A1)-(A4)}  hold. If  $(\Theta^{i}_r) = (Y^{i}_r, Z^{i}_r, U^{i}_r)$  is   the  corresponding   solution,  then  there    exists   a  probability  $\tl  \P$  such  that  
$$ \widehat{Y}_t =  \tl\E\left[ \widehat{\xi}  \exp\left(\int_t^\infty  a_s  ds\right) +  \int_t^\infty \left[f^1(s, \Theta^2_s) - f^2(s, \Theta^2_s)\right]\exp\left(\int_t^s   a_r  dr\right)ds\bigg\vert {\cal F}_t  \right] $$
where    $ \widehat{Y}_t $  and   $ \widehat{\xi}$  are   given   by   \eqref{notation}. 
\end{lemma}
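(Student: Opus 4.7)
The idea is to exhibit $(\widehat Y,\widehat Z,\widehat U)$ as the solution of a \emph{linear} BSDEP with random coefficients of the type treated in Proposition \ref{eq-lineaire}, and then simply read off the claimed representation. Subtracting \eqref{backw-i} for $i=1,2$, the triple solves, for $t\ge 0$,
$$\widehat Y_t = \widehat \xi + \int_t^\infty \bigl[f^1(s,\Theta^1_s) - f^2(s,\Theta^2_s)\bigr]ds - \int_t^\infty \widehat Z_s\,dW_s - \int_t^\infty\!\!\int_E \widehat U_s(e)\,\tl\mu(ds,de).$$
I would rewrite the driver as $\f_s + a_s\widehat Y_s + b_s\widehat Z_s + \int_E \a_s(e)\widehat U_s(e)\l(de)$, where $\f_s := f^1(s,\Theta^2_s) - f^2(s,\Theta^2_s)$ gathers the part that is not proportional to $(\widehat Y,\widehat Z,\widehat U)$.

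\textbf{Linearising slot by slot.} Telescope $f^1(s,\Theta^1_s)-f^1(s,\Theta^2_s)$ into three successive single-slot differences. Using {\bf (A2)}, the $Y$- and $Z$-increments are controlled by $\g(s)|\widehat Y_s|$ and $\r(s)|\widehat Z_s|$, so setting
$$a_s = \frac{f^1(s,Y^1_s,Z^1_s,U^1_s) - f^1(s,Y^2_s,Z^1_s,U^1_s)}{\widehat Y_s}\mathds{1}_{\{\widehat Y_s\neq 0\}},\qquad b_s = \frac{f^1(s,Y^2_s,Z^1_s,U^1_s) - f^1(s,Y^2_s,Z^2_s,U^1_s)}{|\widehat Z_s|^2}\widehat Z_s\mathds{1}_{\{\widehat Z_s\neq 0\}}$$
yields progressively measurable coefficients with $|a_s|\le\g(s)$ and $|b_s|\le\r(s)$. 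For the $U$-slot I need a predictable $\a_s(e)$ satisfying $\a_s(e)>-1$ and $|\a_s(e)|\le C(1\wedge|e|)$ such that $f^1(s,Y^2_s,Z^2_s,U^1_s) - f^1(s,Y^2_s,Z^2_s,U^2_s) = \int_E \a_s(e)\widehat U_s(e)\l(de)$. Applying {\bf (A3)} in both directions bounds the left-hand side from above and from below by $\pm\s(s)\int_E \widehat U_s(e)\,\b_s(e)\l(de)$, so a measurable selection inside the segment $[c(1\wedge|e|),\,C(1\wedge|e|)]$ produces an admissible $\a_s$ (this is the Royer-type linearisation used in \cite{Royer}).

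\textbf{Application of Proposition \ref{eq-lineaire} and main obstacle.} With the coefficients above and $\f_s = f^1(s,\Theta^2_s) - f^2(s,\Theta^2_s)\le 0$ by {\bf (A5)}, the required integrability $\E\bigl[(\int_0^\infty|\f_s|\,ds)^2\bigr]<\infty$ follows from {\bf (A1)} combined with the linear-growth estimates on $f^1,f^2$ implied by {\bf (A2)}--{\bf (A3)} evaluated on $(Y^2,Z^2,U^2)\in\mathcal B^2(\R)$. All hypotheses of Proposition \ref{eq-lineaire} are then verified (with $C_{\ref{eq-lineaire}}=C$), and its conclusion is \emph{exactly} the claimed formula for $\widehat Y_t$ under the probability $\tl\P$ associated with the Girsanov drift $(b_s,\a_s)$. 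The genuine obstacle is the construction of $\a_s$: since {\bf (A3)} provides only a one-sided inequality, turning it into an equality of the form $\int_E\a_s(e)\widehat U_s(e)\l(de)$ requires a careful measurable selection on $E$, pointwise in $(\o,s)$, while preserving the bounds $\a_s(e)\in[c,C](1\wedge|e|)$. Once this linearisation is secured, the remainder of the argument is a routine invocation of Proposition \ref{eq-lineaire}.
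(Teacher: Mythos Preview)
Your approach is exactly the paper's: subtract the two equations, telescope $f^1(s,\Theta^1_s)-f^1(s,\Theta^2_s)$ slot by slot into difference-quotient coefficients $a_s,b_s,\a_s$, identify the residual $\f_s=f^1(s,\Theta^2_s)-f^2(s,\Theta^2_s)$, and invoke Proposition~\ref{eq-lineaire}. The only visible difference is the handling of the $U$-slot: the paper simply \emph{defines} $\a_s(e)$ as the pointwise ratio $\bigl(f^1(s,Y^1_s,Z^1_s,U^1_s(e))-f^1(s,Y^1_s,Z^1_s,U^2_s(e))\bigr)/\widehat U_s(e)\,\mathds{1}_{\{\widehat U_s(e)\neq 0\}}$ and asserts the bounds $|\a_s(e)|\le C(1\wedge|e|)$, $\a_s(e)>-1$ directly from {\bf(A3)}, without going through a measurable-selection argument; your caution about this step is warranted (indeed, since {\bf(A3)} is stated with a fixed $\b$, applying it to $(u,u')$ and $(u',u)$ actually forces equality, so $\a_s(e)=\s(s)\b_s(e)$ works outright and no selection is needed).
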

\begin{proof}
W.l .o.g   we  assume  $d =1$. Define   $\f_s   =   f^1\left(s, \Theta^2_s\right) -f^2\left(s, \Theta^2_s\right)$ and  
%
\begin{align*}
\D_y f^1(s)   &=  \frac{f^1(Y^1_s, Z^1_s, U^1_s) - f^1(Y^2_s, Z^1_s, U^1_s)}{\widehat{Y}_s}\mathds{1}_{\{\widehat{Y}_s\neq 0\}},   \\  
 \D_z f^1(s)   &=  \frac{f^1(Y^1_s, Z^1_s, U^1_s) - f^1(Y^1_s, Z^2_s, U^1_s)}{\widehat{Z}_s}\mathds{1}_{\{\widehat{Z}_s\neq 0\}},   \\
  \D_u f^1(s, e)   &=  \frac{f^1(Y^1_s, Z^1_s, U^1_s(e)) - f^1(Y^1_s, Z^1_s, U^2_s(e))}{\widehat{U}_s(e)}\mathds{1}_{\{\widehat{U}_s(e)\neq 0\}}
\end{align*}
Then  $(\widehat{\Theta}_t ) _{0\le t \le T}$ is  solution   to   
\begin{align}
\label{backw-Theta}
\widehat{Y}_t &=\widehat \xi  + \int_t^T \left(\f_s  + \D_y f^1(s)\widehat{Y}_s +\D_zf^1(s)\widehat{Z}_s+ \int_E \D_u f^1(s, e)\widehat{U}_s (e) \l(de)\right)ds 
\nonumber\\
& - \int_t^T\!\!\widehat{Z}_s dW_s  -  \int_t^T\!\!\int_E \widehat{U}_s(e) \tl \mu(ds, de).
\end{align}
By  assumptions  on  the  generator  $f^1$, we  have   
$$ |\D_y f^1(s)|\le  \g(s),     \quad   |\D_zf^1(s)| \le  \r(s),    \quad |\D_u f^1(s,e)| \le C(1\wedge |e|) \;  \mbox{and}  \; \D_u f^1(s, e)>-1$$
Hence   applying Proposition \ref{eq-lineaire} with  $a_s= \D_y f^1(s)$,  $b_s= \D_zf^1(s) $   and   $\a_s(e) =  \D_u f^1(s,e)$ we get  the desired   result. 
\end{proof}

{\bf Proof  of  Theorem \ref{comp-Lipsch}}: 
Applying   the   previous  Lemma   and  taking in  account   assumptions   {\bf (A1)},  we  deduce  that   $ \widehat{Y}_t \le 0$   since $\widehat \xi \le0$ and        $ \f_s  \le 0.$

\end{document}